\newtheorem{thm}{Theorem}[section]
\newtheorem{corollary}[thm]{Corollary}
\newtheorem{lemma}[thm]{Lemma}
\theoremstyle{definition}
\newtheorem{example}[thm]{Example}
\newtheorem{examples}[thm]{Examples}
\newtheorem{remark}[thm]{Remark}
\newtheorem*{ack}{Acknowledgements}
\newtheorem*{conventions}{Conventions}
\setlist[enumerate]{label=(\arabic*)}
\title{On automorphisms of $p$-torsion $\mathbf{G}_m$-gerbes}
\author{Noah Olander}
\date{}
\begin{document}
\maketitle

\begin{abstract}
    Rouquier proved that for a smooth projective variety $X$, the group scheme $\operatorname{Pic}^0_X \rtimes \operatorname{Aut}^0_X$ is an invariant of the derived category of $X$. This was generalized to the twisted case by Olsson, who associated a group algebraic space $\operatorname{Aut}^0_{\mathcal{X}}$ to a $\mathbf{G}_m$-gerbe $\mathcal{X} \to X$ and proved it to be a twisted derived invariant. In characteristic zero, Olsson showed that $\operatorname{Aut}^0_{\mathcal{X}}$ is an extension of $\operatorname{Aut}^0_X$ by a subgroup scheme of $\operatorname{Pic}_X$. It is important in applications to compute $\operatorname{Aut}^0_{\mathcal{X}}$ in positive characteristic as well, which is the problem we consider here. We use deformation theory and representability results of Bragg and Olsson to prove that in many cases of interest, Olsson's description of $\operatorname{Aut}^0_{\mathcal{X}}$ as an extension also holds in characteristic $p$. As a corollary, we show that twisted derived equivalent abelian varieties are isogenous, simultaneously generalizing results of Honigs and Lane. We also provide an example showing that Olsson's extension description does not hold in general.
    This uses a result of Illusie on crystalline and flat cohomology and a result of Yang on the Brauer group of an ordinary variety, which we generalize to cohomology in arbitrary degree.
\end{abstract}

\section{Introduction}

Let $k$ be an algebraically closed field and $X, Y$ smooth projective varieties over $k$. Rouquier proved in \cite{MR2806466} that given a Fourier--Mukai equivalence
$$
\Phi_K : D^b_{Coh}(X) \xrightarrow{\cong} D^b_{Coh}(Y)
$$
between the bounded derived categories of $X$ and $Y$, there is an associated isomorphism of $k$-group schemes
$$
\operatorname{Pic}^0_X \rtimes \operatorname{Aut}^0_X \xrightarrow{\cong} \operatorname{Pic}^0_Y \rtimes \operatorname{Aut}^0_Y,
$$
where in the semi-direct products, automorphisms act on the right on line bundles by pullback. This result has proven foundational in the study of derived categories of varieties. For example, it is used in the paper \cite{Honigs} to show that derived equivalent abelian varieties are isogenous.\footnote{Strictly speaking, this only uses the special case of Rouquier's result when $X, Y$ are abelian varieties, which was shown earlier by Orlov in \cite[Theorem 2.19]{orlov2002abelian}.} See also \cite{PopaSchnell}, \cite{caucci2019derived}, \cite{caucci2023derived} for papers which heavily use Rouquier's work.

Rouquier's result was recently generalized by Olsson in \cite{MR4961251} to the case of twisted derived categories. 
Namely, if $\mathcal{X}, \mathcal{Y}$ are $\mathbf{G}_m$-gerbes over $X, Y$ with cohomology classes $\alpha \in H^2(X, \mathbf{G}_m), \beta \in H^2(Y, \mathbf{G}_m)$, respectively, then given any Fourier--Mukai equivalence 
$$
\Phi_K : D^b_{Coh}(X, \alpha) \xrightarrow{\cong} D^b_{Coh}(Y, \beta)
$$
between the $\alpha$- and $\beta$-twisted bounded derived categories of $X$ and $Y$, there is an associated isomorphism of $k$-group algebraic spaces
$$
\operatorname{Aut}^0_{\mathcal{X}} \to \operatorname{Aut}^0_{\mathcal{Y}},
$$
where $\operatorname{Aut}^0_{\mathcal{X}}$ is the connected component of the identity of the $k$-group algebraic space $\operatorname{Aut}_{\mathcal{X}}$ parametrizing (classes up to natural isomorphism of) automorphisms of $\mathcal{X}$ over $X$ inducing the identity on stabilizer groups; and similarly for $\operatorname{Aut}^0_{\mathcal{Y}}$. See Section \ref{section-prelims} for details. 

Olsson's result indeed generalizes Rouquier's because when $\alpha = 0$, so $\mathcal{X} \to X$ is a trivial gerbe, 
$$
\operatorname{Aut}_{\mathcal{X}} \cong \operatorname{Pic}_X \rtimes \operatorname{Aut}_X.
$$
See \cite[Example 1.2]{MR4961251}. For general $\mathcal{X}$, Olsson proves that there is an exact sequence
\begin{equation}
\label{equn-theexactsequence}
1 \to \operatorname{Pic}_X \to \operatorname{Aut}_{\mathcal{X}} \to \operatorname{Aut}_X
\end{equation}
of $k$-group algebraic spaces, and that if the order of $\alpha \in H^2(X, \mathbf{G}_m)$ is invertible in $k$, then the associated homomorphism of identity components
\begin{equation}
\label{equn-thehomomorphism}
\operatorname{Aut}^0_{\mathcal{X}} \to \operatorname{Aut}^0_X
\end{equation}
is surjective. This 
surjectivity allows one to compute $\operatorname{Aut}^0_{\mathcal{X}}$ in practice and has therefore been crucial in applications of Olsson's work, for example in \cite{lane2026twistedderivedequivalencesabelian}, where among other things, the author uses Olsson's results to show that twisted derived equivalent abelian varieties over the complex numbers are isogenous. This partially generalizes  the result of Honigs discussed above. More applications can be found in the papers \cite{li2026pointobjectsderivedequivalences} and \cite{perry2026semiregularitytheoremequivariantnoncommutative}. In all these cases, the authors use surjectivity of $\operatorname{Aut}^0_{\mathcal{X}} \to \operatorname{Aut}^0_X$, and so they are forced to assume that the order of the Brauer class is invertible in the base field (or work in characteristic zero).

In this paper, we study the morphism $\operatorname{Aut}^0_{\mathcal{X}} \to \operatorname{Aut}^0_X$ when the order of $\alpha$ is divisible by the characteristic $p>0$ of $k$. We first prove in Section \ref{section-sufficient} that in many cases of interest, surjectivity continues to hold.

\begin{thm}[See Theorem \ref{thm-sufficientconditions}]
    Assume any of the following conditions hold:
\begin{enumerate}
    \item  $H^0(X, T_X) = 0$,
    \item $H^2(X, \mathcal{O}_X) = 0$, 
    \item $\operatorname{Aut}^0_X$ is an abelian variety (for example, if $X$ is an abelian variety).
\end{enumerate}
Then
$
\operatorname{Aut}^0_{\mathcal{X}} \to \operatorname{Aut}^0_X
$
is surjective.
\end{thm}

The proof uses results from \cite{bragg2025representabilitycohomologyfiniteflat} on the representability of higher direct images of flat group schemes. As an application, we generalize the aforementioned result of Lane to arbitrary characteristic.

\begin{corollary}[See Corollary \ref{corollary-applicationtoabelianvarieties}]
   Suppose $X, Y$ are abelian varieties, $\alpha \in \operatorname{Br}(X), \beta \in \operatorname{Br}(Y)$, and there is a Fourier--Mukai equivalence $D^b_{\operatorname{Coh}}(X, \alpha) \cong D^b_{\operatorname{Coh}}(Y, \beta)$. Then $X$ and $Y$ are isogenous. 
\end{corollary}

We additionally construct an example showing that $
\operatorname{Aut}^0_{\mathcal{X}} \to \operatorname{Aut}^0_X
$
is not surjective in general.

\begin{thm}[See Theorem \ref{thm-mainpaperversion}]
\label{thm-main}
The homomorphism (\ref{equn-thehomomorphism}) of $k$-group algebraic spaces need not be surjective when the order of $\alpha$ is equal to the characteristic of $k$. In fact, if $k$ has characteristic $2$, then there exist a smooth projective variety $X$ and $\mathbf{G}_m$-gerbe $\mathcal{X} \to X$ with the following properties:
\begin{enumerate}
    \item $X$ has dimension 4, is ordinary in the sense of Kato, and has trivial tangent bundle.
    \item The class $\alpha \in H^2(X, \mathbf{G}_m)$ of $\mathcal{X}$ has order $2$. 
    \item The homomorphism of $k$-group algebraic spaces $\operatorname{Aut}^0_{\mathcal{X}} \to \operatorname{Aut}^0_X$ is not surjective. 
\end{enumerate}
\end{thm}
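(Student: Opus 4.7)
The plan is to take $X = A$ to be an ordinary abelian variety of dimension $4$ over $k$, chosen with $\operatorname{End}(A) = \mathbf{Z}$ so that $\operatorname{Aut}^0_X = A$ (this condition is only a convenience). Property (1) is then immediate: $X$ has dimension $4$, its tangent bundle is trivial because $X$ is a group scheme, and ordinary abelian varieties are ordinary in Kato's sense.

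For property (2), the class $\alpha$ is constructed from the $2$-torsion of the fppf sheafified Artin--Mazur formal group $\Phi^2_X$. Because $X$ is Kato ordinary, the paper's expository sections identify $\Phi^2_X$ (as an fppf sheaf of groups) with a power of $\mu_{2^\infty}$ of rank $\dim_k H^2(X, \mathcal{O}_X) = \binom{4}{2} = 6$. Hence $\Phi^2_X[2] \cong \mu_2^6$ injects into $H^2(X, \mathbf{G}_m)[2]$, and $\alpha$ is taken to be a general nonzero class in this subgroup.

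For (3), the key observation is that although translations by $k$-points of $A$ act trivially on $H^2(A, \mathbf{G}_m)$ (by the standard rigidity argument for cohomology of smooth sheaves under connected group actions), the action on infinitesimal thickenings need not be trivial in characteristic $p$. Concretely, given a tangent vector $v \in \operatorname{Lie}(A)$, viewed as an $R$-point of $A$ reducing to the identity over $R = k[\epsilon]/(\epsilon^2)$, the associated translation $t_v$ produces a difference $t_v^* \alpha - \alpha$ lying in $\ker\bigl(H^2(A \times_k \operatorname{Spec} R, \mathbf{G}_m) \to H^2(A, \mathbf{G}_m)\bigr)$, which the fppf sheafified $\Phi^2_X$ identifies with $\epsilon \cdot H^2(X, \mathcal{O}_X)$. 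Unwinding the description $\Phi^2_X \cong \mu_{2^\infty}^6$, one finds that the resulting obstruction $\operatorname{Lie}(A) \otimes \Phi^2_X[2] \to H^2(X, \mathcal{O}_X)$ is a non-degenerate pairing, so some $v$ fails to lift to $\operatorname{Aut}^0_{\mathcal{X}}(R)$, defeating any candidate surjective map $\operatorname{Aut}^0_{\mathcal{X}} \to \operatorname{Aut}^0_X$ of algebraic spaces.

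The main obstacle is step (3). The heart is carrying out the obstruction computation and verifying non-degeneracy of the pairing; this demands a careful matching of the deformation-theoretic behavior of $\Phi^2_X$ with the translation action of $A$. Everything hinges on the multiplicative-type structure (exactly what Kato ordinariness provides), which is what makes the $2$-torsion classes of $\Phi^2_X$ produce nontrivial infinitesimal thickenings; in the prime-to-$p$ setting no such formal contribution is available, and Olsson's surjectivity holds.
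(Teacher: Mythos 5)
Your choice of example is the fatal problem: for $X$ an abelian variety the map $\operatorname{Aut}^0_{\mathcal{X}} \to \operatorname{Aut}^0_X$ is \emph{always} surjective, so no class $\alpha$ on $X=A$ can witness the theorem. Indeed $\operatorname{Aut}^0_X = A$ is an abelian variety; any $\alpha$ of order $2^i$ lifts by the Kummer sequence to some $\beta \in H^2(X,\mu_{2^i})$, and the obstruction map $\varphi \mapsto \varphi^*\alpha - \alpha$ then factors through $\varphi \mapsto \varphi^*\beta - \beta$, a morphism from the proper connected group $\operatorname{Aut}^0_X$ to the sheaf $R^2\pi_*\mu_{2^i}$, which is representable by an \emph{affine} group scheme of finite type (Bragg--Olsson); such a morphism is constant. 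This is exactly sufficient condition (4) in the paper's final section, and it kills your claimed non-degenerate pairing: the pairing is identically zero. Concretely, for an ordinary abelian variety the $2$-primary Brauer group is divisible, so $\operatorname{Br}(A)/2 = 0$ and $\operatorname{dlog}(\alpha)=0$ for every $2$-torsion class; since $\operatorname{Pic}_A$ is smooth (so the ``bad subspace'' $H^1(k[\epsilon],\operatorname{Pic}_A)$ vanishes), the tangent map of the obstruction is $\theta \mapsto \theta(\operatorname{dlog}\alpha) = 0$. There is also a more basic defect in step (2): $\Phi^2_{fl}$ is the \emph{formal} completion of $R^2\pi_*\mathbf{G}_m$ at the identity, and its $2$-torsion $\mu_2^{\oplus 6}$ is infinitesimal in characteristic $2$ --- it has only the trivial $k$-point --- so it does not supply any nonzero element of $H^2(X,\mathbf{G}_m)(k)[2]$. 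You cannot manufacture the Brauer class $\alpha$ from the formal group.

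What is actually needed is a variety with trivial tangent bundle for which (i) $\operatorname{Br}(X)/2$ is nonzero --- equivalently the $2$-primary Brauer group has a non-divisible finite summand --- so that $\operatorname{dlog}$ has large image in $H^2(X,\Omega^1_X) \cong H^2(X,\mathcal{O}_X)\otimes H^0(X,\Omega^1_X)$, and (ii) the subspace $H^1(k[\epsilon],\operatorname{Pic}_X)\otimes H^0(X,\Omega^1_X)$ (which is the part of the tangent space of $\Phi^2$ invisible to the fppf sheaf $R^2\pi_*\mathbf{G}_m$, nonzero exactly when $\operatorname{Pic}_X$ is non-smooth) is small enough that the $\operatorname{dlog}$ image escapes it. An abelian variety fails (i) outright. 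The paper's Igusa-type quotient $X = (A\times B)/(\mathbf{Z}/2)$, with $\dim A = d$, $\dim B = e$ and $2de - d > d(d+e)$ (e.g.\ $d=1$, $e=3$, giving $\dim X = 4$), is engineered precisely so that the $\operatorname{dlog}$ image spans a $k$-subspace of dimension $2de-d$ while the bad subspace has dimension $d(d+e)$.
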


In fact, this occupies the bulk of the paper (Sections \ref{section-completetorsion}--\ref{section-proofmaintheorem}), and while the construction of the example is not difficult, the proof that it works uses a number of technical results. Using deformation theory, we show that it suffices to find an $X$ for which there exist an $\alpha \in H^2(X, \mathbf{G}_m)$ and a $\theta \in H^0(X, T_X)$ such that $\theta(\operatorname{dlog}(\alpha))$ is not in the subspace $H^1(\operatorname{Spec}k[\epsilon], \operatorname{Pic}_X) \subset H^2(X, \mathcal{O}_X)$. See Section \ref{section-prelims} for details. Thus, we seek an $X$ for which the $k$-span of the image of $$
\operatorname{dlog} : H^2(X, \mathbf{G}_m) \to H^2(X, \Omega_X)
$$
is large, as is the vector space $H^0(X, T_X)$, while the subspace $H^1(\operatorname{Spec}k[\epsilon], \operatorname{Pic}_X) \subset H^2(X, \mathcal{O}_X)$ is small. We take for $X$ a quotient of a product $A \times B$ of ordinary abelian varieties over $k$ by the free action of $\mathbf{Z}/2$ given by multiplication by $(-1)$ on the first factor and by translation by a $2$-torsion point on the second. This is a natural generalization of Igusa's famous surface from \cite{MR74085}, but the constraints at hand require us to pick $d = \operatorname{dim}(A)$ and $e = \operatorname{dim}(B)$ so that
$$2de-d > d(e+d)$$
(and thus not $d = 1 = e$ as in \cite{MR74085}, see Remark \ref{remark-classicaligusasurface}). To show that the $k$-span of the image of $\operatorname{dlog} : H^2(X, \mathbf{G}_m) \to H^2(X, \Omega_X)$ is large, we use results from \cite{MR565469} on the relation between flat and crystalline cohomology and a result from \cite{MR4936530} stating that the $p$-power torsion subgroup of the Brauer group of an ordinary smooth proper variety in characteristic $p$ is isomorphic to a direct sum of a finite group and $(\mathbf{Q}_p/\mathbf{Z}_p)^{\oplus e}$ for some integer $e \geq 0$. See Section \ref{section-flatcohomologyordinary}. We are able to prove the following slight generalization of Yang's result using Greenlees--May duality between derived complete and torsion objects of $D(\mathbf{Z})$ (recalled in Section \ref{section-completetorsion}).

\begin{thm}[See Theorem \ref{lemma-cofinite}]
Assume $X$ is ordinary. Then for all $i \geq 1$ there are an integer $e \geq 0$ and a finite Abelian $p$-group $F$ such that the $p$-power torsion subgroup of $H^i(X, \mathbf{G}_m)$ is isomorphic to 
    $
    (\mathbf{Q}_p/\mathbf{Z}_p)^{\oplus e} \oplus F.
    $
\end{thm}

\begin{conventions}
    We generally follow the conventions on algebraic spaces and stacks of \cite{stacks-project}. If $S$ is a scheme, $X$ is a scheme (or algebraic space) over $S$, and $G \to S$ is a flat group scheme locally of finite presentation, then  $H^i(X, G)$ denotes the cohomology of the sheaf represented by $G$ on the site $(\operatorname{Sch}/X)_{fppf}$. If $G$ is smooth over $S$, this is the same as the cohomology of the sheaf represented by $G$ on the small \'etale site of $X$ by a theorem of Grothendieck \cite[Th\'eor\`eme 11.7]{MR244271}. By $\mathbf{G}_m$-gerbe we mean a gerbe \emph{banded by $\mathbf{G}_m$}. This is the data of a gerbe $\mathcal{X}$ over an algebraic space $X$ in the sense of \cite[\href{https://stacks.math.columbia.edu/tag/06QB}{Tag 06QB}]{stacks-project} \emph{together with} for every object $x \in \mathcal{X}$ lying over a scheme $T$, an isomorphism of groups $H^0(T, \mathcal{O}_T)^\times \to \operatorname{Aut}_{\mathcal{X}_T}(x)$ such that these isomorphisms are compatible with morphisms $x \to y$ in $\mathcal{X}$. An \emph{equivalence of $\mathbf{G}_m$-gerbes over an algebraic space $X$} means an equivalence compatible with these isomorphisms. With these conventions, $\mathbf{G}_m$-gerbes over $X$ are classified up to equivalence by the group $H^2(X, \mathbf{G}_m)$, see Giraud \cite[IV]{MR344253}. Also, for an abelian group (or sheaf) $A$ and integer $n$, we denote by $A[n]$ the kernel of the homomorphism $A \to A$ given by multiplication by $n$. For an abelian variety $X$ we write $X^t$ for the dual abelian variety.
\end{conventions}

\begin{ack}
    I am grateful to Martin Olsson for many enlightening conversations and to him and Danny Bragg for sharing a draft of a forthcoming paper from which I learned about the fppf sheafified variants of the Artin--Mazur formal groups. This work was partially supported by the National Science Foundation under grant 
DMS-2402087.
\end{ack}

\section{Preliminaries}
\label{section-prelims}

Let $k$ be an algebraically closed field and $X$ a smooth proper variety over $k$. Let $\mathcal{X} \to X$ be a $\mathbf{G}_m$-gerbe (please see our conventions on $\mathbf{G}_m$-gerbes). Consider the category fibered in groupoids $\mathcal{A}ut_{\mathcal{X}}$ over $\operatorname{Sch}/k$  whose fiber over a $k$-scheme $T$ is the groupoid of autoequivalences $\mathcal{X}_T \to \mathcal{X}_T$ of stacks over $T$ which induce the identity on bands. Olsson shows in \cite{MR4961251} that this is a $\mathbf{G}_m$-gerbe over a $k$-group algebraic space $\operatorname{Aut}_{\mathcal{X}}$, which necessarily represents the fppf sheafification of the functor taking a $k$-scheme $T$ to the set of isomorphism classes of automorphisms $\mathcal{X}_T \to \mathcal{X}_T$ of algebraic stacks over $T$ which induce the identity on bands. Since an automorphism $\mathcal{X}_T \to \mathcal{X}_T$ inducing the identity on bands induces a morphism of associated $\mathbf{G}_m$-rigidifications, there is a homomorphism of $k$-group algebraic spaces $\operatorname{Aut}_{\mathcal{X}} \to \operatorname{Aut}_X$. Olsson proves in \emph{loc. cit.} that this homomorphism fits into a left exact sequence
\begin{equation}
\label{equn-olssonsequence}
0 \to \operatorname{Pic}_X \to \operatorname{Aut}_{\mathcal{X}} \to \operatorname{Aut}_X
\end{equation}
of $k$-group algebraic spaces.

Write $\pi : (Sch/X)_{fppf} \to (Sch/k)_{fppf}$ for the structure morphism. Then there is a right action 
$$
R^2\pi_{ *}\mathbf{G}_m \times \operatorname{Aut}_X \to R^2\pi_{ *}\mathbf{G}_m, \hspace{3 em} (x, \varphi) \mapsto \varphi^*x
$$
which makes $R^2\pi_{ *}\mathbf{G}_m$ into a right $\operatorname{Aut}_X$-module. Let $\alpha \in H^2(X, \mathbf{G}_m) = R^2\pi_{ *}\mathbf{G}_m(k)$ be the cohomology class of $\mathcal{X}$. Then there is a morphism of sheaves of sets
$$
a : \operatorname{Aut}_X \to R^2\pi_{ *}\mathbf{G}_m, \hspace{3 em} \varphi \mapsto \varphi^*(\alpha) - \alpha
$$
which takes the identity to the identity. 

\begin{lemma}
\label{lemma-exactsequence}
The exact sequence (\ref{equn-olssonsequence}) can be continued to an exact sequence
$$
0 \to \operatorname{Pic}_X \to \operatorname{Aut}_{\mathcal{X}} \to \operatorname{Aut}_X \xrightarrow{a} R^2\pi_{ *}\mathbf{G}_m.
$$
of sheaves of pointed sets on $(\operatorname{Sch}/k)_{fppf}$.
\end{lemma}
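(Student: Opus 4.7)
The plan is to prove exactness at $\operatorname{Aut}_X$, i.e., that for any $k$-scheme $T$ and any $\varphi \in \operatorname{Aut}_X(T)$, one has $a(\varphi) = 0$ in $R^2\pi_{*}\mathbf{G}_m(T)$ if and only if $\varphi$ lies fppf-locally on $T$ in the image of $\operatorname{Aut}_{\mathcal{X}}(T) \to \operatorname{Aut}_X(T)$ (which is exactly the sheaf-theoretic condition for being in the image of the map of sheaves). Since both $\operatorname{Aut}_X$ and $R^2\pi_{*}\mathbf{G}_m$ are fppf sheafifications of presheaves whose sections over $T$ are, respectively, honest automorphisms of $X_T/T$ and honest classes in $H^2(X_T, \mathbf{G}_m)$, after passing to an fppf cover I may assume that $\varphi$ is represented by an actual automorphism $\varphi \colon X_T \to X_T$ over $T$.

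The key step is the observation that lifting $\varphi$ to an automorphism $\Phi \colon \mathcal{X}_T \to \mathcal{X}_T$ over $T$ which is the identity on bands is, by the universal property of the fiber product, the same data as an equivalence of $\mathbf{G}_m$-gerbes $\mathcal{X}_T \xrightarrow{\sim} \varphi^{*}\mathcal{X}_T$ over $X_T$. (Given $\Phi$, compose with the structure map to factor through $\varphi^{*}\mathcal{X}_T$; conversely, given the equivalence, compose with the projection $\varphi^{*}\mathcal{X}_T \to \mathcal{X}_T$.) By Giraud's classification of $\mathbf{G}_m$-gerbes up to equivalence by $H^2(-, \mathbf{G}_m)$ (cited in the Conventions), such an equivalence exists precisely when the classes agree, i.e., when $\varphi^{*}\alpha_T = \alpha_T$ in $H^2(X_T, \mathbf{G}_m)$, where $\alpha_T$ denotes the pullback of $\alpha$ to $X_T$. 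This is exactly the condition that $a(\varphi)$ vanishes as an element of the presheaf $T \mapsto H^2(X_T, \mathbf{G}_m)$ whose sheafification is $R^2\pi_{*}\mathbf{G}_m$.

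To conclude, I will pass carefully between the presheaf and its sheafification. If $\varphi$ lifts fppf-locally to $\operatorname{Aut}_{\mathcal{X}}$, then after some cover $T' \to T$ the lift produces an equivalence $\mathcal{X}_{T'} \simeq \varphi^{*}\mathcal{X}_{T'}$, whence $\varphi^{*}\alpha_{T'} = \alpha_{T'}$ in $H^2(X_{T'}, \mathbf{G}_m)$, and therefore $a(\varphi) = 0$ in $R^2\pi_{*}\mathbf{G}_m(T)$. Conversely, if $a(\varphi) = 0$ in $R^2\pi_{*}\mathbf{G}_m(T)$, then by the definition of sheafification there is an fppf cover $T' \to T$ on which $\varphi^{*}\alpha_{T'} = \alpha_{T'}$ in the presheaf $H^2(X_{T'}, \mathbf{G}_m)$, and the previous paragraph provides the desired lift of $\varphi|_{T'}$ to $\operatorname{Aut}_{\mathcal{X}}(T')$. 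I do not expect a serious obstacle here; the content is essentially Giraud's classification, and the only care needed is the bookkeeping to replace the presheaf $H^2(X_{-}, \mathbf{G}_m)$ by its sheafification $R^2\pi_{*}\mathbf{G}_m$ at the last moment.
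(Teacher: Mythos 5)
Your proof is correct and takes essentially the same approach as the paper: the key identification of a lift of $\varphi$ to $\operatorname{Aut}_{\mathcal{X}}$ with an equivalence of $\mathbf{G}_m$-gerbes $\mathcal{X}_T \simeq \varphi^*\mathcal{X}_T$ over $X_T$, followed by Giraud's classification and fppf sheafification of the resulting exact sequence of pointed sets, is exactly the paper's argument. The only cosmetic difference is that you spell out the presheaf-versus-sheaf bookkeeping that the paper compresses into ``sheafifying gives the result.''
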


\begin{proof}
    Exactness on the left is in Olsson's paper, so it suffices to show exactness at $\operatorname{Aut}_X$. Let $\mathcal{Y} \to Y$ be a $\mathbf{G}_m$-gerbe over an algebraic space and $\varphi : Y \to Y$ an automorphism. Then an automorphism $\mathcal{Y} \to \mathcal{Y}$ inducing the identity on bands and inducing $\varphi$ on rigidifications is the same data as a commutative diagram
    $$
    \begin{tikzcd}
        \mathcal{Y} \ar[r] \ar[d] &\mathcal{Y} \ar[d] \\
        Y \ar[r, "\varphi"] &Y,
    \end{tikzcd}
    $$
    which in turn is equivalent to a morphism $\mathcal{Y} \to Y \times _{\varphi, Y} \mathcal{Y}$ over $Y$ inducing the identity on bands, and such a morphism would necessarily be an equivalence of $\mathbf{G}_m$-gerbes over $Y$. This data exists if and only if the pullback of the cohomology class of $\mathcal{Y}$ via $\varphi$ is equal to itself. 

    Applying this to the gerbe $\mathcal{X}_T \to X_T$ for any $k$-scheme $T$ shows that the sequence of pointed sets
    $$
    \mathcal{A}ut_{\mathcal{X}}(T)/\cong \longrightarrow\operatorname{Aut}_X(T) \xrightarrow{\varphi \mapsto \varphi^*\alpha - \alpha}H^2(X_T, \mathbf{G}_m)
    $$
    is exact. Sheafifying with respect to the fppf topology gives the result.
\end{proof}

Let us additionally write $\pi_{\acute{e}t}$ for the structure morphism 
$(\operatorname{Sch}/X)_{\acute{e}t} \to (\operatorname{Sch}/k)_{\acute{e}t}$ of big \'etale sites. For $i \geq 0$ write $\Phi^i, \Phi^i_{fl}$ for the formal completions at the identity of $R^i\pi_{\acute{e}t, *}\mathbf{G}_m, R^i\pi_{ *}\mathbf{G}_m.$ That is, let $\operatorname{Art}_k$ denote the  category of local, finite-dimensional $k$-algebras and for $A \in \operatorname{Art}_k$ let 
\begin{align*}
\Phi^i(A) &= \operatorname{Ker}(R^i\pi_{\acute{e}t, *}\mathbf{G}_m(\operatorname{Spec}A) \to R^i\pi_{\acute{e}t, *}\mathbf{G}_m(\operatorname{Spec}A/\mathfrak{m}_A)), \\ 
\Phi^i_{fl}(A) &= \operatorname{Ker}(R^i\pi_{ *}\mathbf{G}_m(\operatorname{Spec}A) \to R^i\pi_{ *}\mathbf{G}_m(\operatorname{Spec}A/\mathfrak{m}_A)).
\end{align*}
Note that sheaves on $(\operatorname{Sch}/k)_{\tau}$ for $\tau = fppf, \acute{e}t$ have no higher cohomology (over the final object) since $k$ is algebraically closed, so the Leray spectral sequence for $\pi, \pi_{\acute{e}t}$ shows that 
$$
R^i\pi_{\acute{e}t, *}\mathbf{G}_m(\operatorname{Spec}A/\mathfrak{m}_A)) = H^i(X, \mathbf{G}_m) = R^i\pi_{ *}\mathbf{G}_m(\operatorname{Spec}A/\mathfrak{m}_A)).
$$
We have the following results of Raynaud and Ekedahl, building on the work \cite{MR457458} of Artin and Mazur. See also forthcoming work of Bragg and Olsson which uses their results \cite{bragg2025representabilitycohomologyfiniteflat} to generalize the following theorem beyond the case of $\mathbf{G}_m$.

\begin{thm}[Raynaud, Ekedahl]
\label{thm-raynaud}
    \begin{enumerate}
        \item The functors $\Phi^i_{fl}$ are pro-representable. 
        \item The functors $\Phi^i$ are the Artin--Mazur formal groups
        $$
        \Phi^i(A) = \operatorname{Ker}(H^i(X_A, \mathbf{G}_m) \to H^i(X, \mathbf{G}_m)).
        $$
        \item For each $A \in \operatorname{Art}_k$ there is an exact sequence of groups
        $$
        0 \to H^1(\operatorname{Spec}(A)_{fppf}, R^{i-1}\pi_{ *}\mathbf{G}_m) \to \Phi^i(A) \to \Phi^i_{fl}(A) \to 0.
        $$
        \item We have 
        $$
        H^j(\operatorname{Spec}(A)_{fppf}, R^{i}\pi_{ *}\mathbf{G}_m)  = H^j(A, \Phi^{i}_{fl})
        $$
        for all $j > 0$ and $i \geq 0$,
        where on the right hand side, $\Phi^{i}_{fl}$ is viewed as a sheaf on $\operatorname{Art}_k$ with the fppf topology (so coverings are given by finite free ring maps $R \to S$ in $\operatorname{Art}_k$). In particular, both groups are zero when $j > 1$.
    \end{enumerate}
\end{thm}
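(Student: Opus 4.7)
The plan is to deduce everything from a single Leray-type spectral sequence comparing the fppf and étale higher direct images of $\mathbf{G}_m$.

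Part (2) is a warmup: Grothendieck's theorem \cite[Théorème 11.7]{MR244271} identifies fppf and étale cohomology of $\mathbf{G}_m$, and the Leray spectral sequence for $X_A \to \operatorname{Spec}(A) \to \operatorname{Spec}(k)$ degenerates because $\operatorname{Spec}(A)$ is strict Henselian local over the algebraically closed field $k$ and so has no higher étale cohomology. Hence $R^i\pi_{\acute{e}t,*}\mathbf{G}_m(\operatorname{Spec}A) = H^i(X_A, \mathbf{G}_m)$, and the formula for $\Phi^i(A)$ follows by restricting to the kernel of reduction modulo $\mathfrak{m}_A$.

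For parts (3) and (4), I would exploit the commutative square of sites
$$
\begin{tikzcd}
(\operatorname{Sch}/X)_{fppf} \ar[r, "\pi"] \ar[d, "\epsilon_X"] & (\operatorname{Sch}/k)_{fppf} \ar[d, "\epsilon_k"] \\
(\operatorname{Sch}/X)_{\acute{e}t} \ar[r, "\pi_{\acute{e}t}"] & (\operatorname{Sch}/k)_{\acute{e}t}.
\end{tikzcd}
$$
The vanishing $R^q\epsilon_{X,*}\mathbf{G}_m = 0$ for $q \geq 1$ (again Grothendieck) collapses one of the composition-of-morphisms spectral sequences and yields, after evaluation at $\operatorname{Spec}(A)$, a spectral sequence
$$E_2^{p,q} = H^p_{fppf}(\operatorname{Spec}A, R^q\pi_*\mathbf{G}_m) \Longrightarrow H^{p+q}(X_A, \mathbf{G}_m).$$
The key claim powering both (3) and (4) is the vanishing $E_2^{p,q} = 0$ for $p \geq 2$: this degenerates the spectral sequence to the claimed short exact sequence of part (3) (after restricting to the kernel of reduction) and simultaneously forces the identification $H^1_{fppf}(\operatorname{Spec}A, R^q\pi_*\mathbf{G}_m) = H^1(A, \Phi^q_{fl})$ of part (4). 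I would establish the vanishing in two moves: first, reduce Čech cohomology on $\operatorname{Spec}(A)$ to covers by finite flat maps staying inside $\operatorname{Art}_k$, using that any fppf cover of an Artinian local scheme can be so refined and that the constant summand $H^q(X, \mathbf{G}_m)$ of $R^q\pi_*\mathbf{G}_m$ contributes nothing to Čech differentials; second, using the pro-representability of part (1), dévissage $\Phi^q_{fl}$ by a short resolution involving formally smooth functors, for which higher fppf cohomology on Artinian bases vanishes.

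For part (1), pro-representability of $\Phi^i_{fl}$, I would apply Schlessinger's criterion. The homogeneity conditions are formal, and finite-dimensionality of the tangent space $\Phi^i_{fl}(k[\epsilon])$ can be controlled using the known finite-dimensionality of $\Phi^i(k[\epsilon]) \subset H^i(X, \mathcal{O}_X)$ from Artin--Mazur \cite{MR457458}; one must order the arguments so that parts (1), (3), (4) are not used circularly, for example by first extracting only the five-term (left-exact) consequence of the main spectral sequence to bootstrap finite-dimensionality. The main obstacle I anticipate is the degree $\geq 2$ vanishing in part (4): producing the functorial resolution of $\Phi^q_{fl}$ by formally smooth functors on $\operatorname{Art}_k$, or equivalently the structural decomposition underlying it, is the genuine content of the Raynaud--Ekedahl refinement of Artin--Mazur and where the real work lies.
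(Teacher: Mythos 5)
Your treatment of (2), (3), and (4) follows essentially the same route as the paper: Grothendieck's theorem to identify fppf and \'etale cohomology of $\mathbf{G}_m$ and to kill higher cohomology of smooth (in particular constant) group schemes over Artinian local bases, the Leray spectral sequence for $\pi$ evaluated at $\operatorname{Spec}A$, the reduction of fppf cohomology of $\operatorname{Spec}A$ to the site of finite free covers in $\operatorname{Art}_k$, and a d\'evissage of $\Phi^i_{fl}$ using the structure theory of pro-representable formal groups to get the degree $\geq 2$ vanishing (the paper outsources this last step to Ekedahl, whose argument uses that a pro-representable formal group is an extension of a formal Lie group by a \emph{connected finite} group scheme --- note that the finite piece is not formally smooth, so your phrase ``resolution by formally smooth functors'' is not quite the right structure; the finite kernel needs its own vanishing argument, e.g.\ by embedding it in a smooth affine group scheme).

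The genuine gap is in part (1). You assert that for Schlessinger's criterion ``the homogeneity conditions are formal'' and that only finite-dimensionality of the tangent space needs attention. This is backwards: the tangent space is the easy part (it is a quotient of $\Phi^i(k[\epsilon])\subseteq H^i(X,\mathcal{O}_X)$ by (3), hence finite-dimensional), whereas left-exactness of $A\mapsto\Phi^i_{fl}(A)$ --- i.e.\ that it carries fiber products $A\times_C B$ in $\operatorname{Art}_k$ to fiber products --- is precisely the substantive content of Raynaud's theorem and is not formal. An fppf sheaf on $(\operatorname{Sch}/k)$ restricted to $\operatorname{Art}_k$ does not automatically commute with such limits unless it is representable, and $R^i\pi_*\mathbf{G}_m$ is not. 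Indeed the unsheafified Artin--Mazur functors $\Phi^i$ are pro-representable only under additional hypotheses (Artin--Mazur require, e.g., formal smoothness in degree $i-1$); the whole point of Raynaud's result, which the paper cites for (1), is that the fppf sheafification kills the obstruction to homogeneity unconditionally. As written, your argument for (1) would equally ``prove'' pro-representability of $\Phi^i$ itself, which is false in general, so something nonformal must be supplied here.
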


\begin{proof}
    The key points are (1), which is \cite[Proposition 2.7.5]{MR563468}, and (3), which is equivalent to \cite[III, Proposition 8.1]{MR800174} in view of (4).
    
    To prove (4), first note that for any $A \in \operatorname{Art}_k$, abelian sheaf $\mathcal{F}$ on $(\operatorname{Sch}/k)_{fppf}$, and $i \geq 0$, we can compute
    $$
    H^i(\operatorname{Spec}A, \mathcal{F})
    $$
    in the site $(\operatorname{Art}_k)_{fppf}$ described in (4). This follows in a standard fashion from the fact that an fppf covering can be refined by a quasi-finite flat covering \cite[\href{https://stacks.math.columbia.edu/tag/0572}{Tag 0572}]{stacks-project}, and a quasi-finite flat covering of a local Artin scheme consists of a finite disjoint union of local Artin schemes each finite free over the base. We use this without further mention. Next, note that for $A \in \operatorname{Art}_k$ and $i \geq 0$, the exact sequence
    $$
    0 \to \Phi^i(A) \to R^i\pi_*\mathbf{G}_m(\operatorname{Spec}A) \to H^i(X, \mathbf{G}_m) \to 0
    $$
    (surjectivity holds because $A \to A /\mathfrak{m} = k$ has a section) can be viewed as the $A$-points of an exact sequence of sheaves
    $$
    0 \to \Phi^i \to R^i\pi_* \mathbf{G}_m \to \underline{H^i(X, \mathbf{G}_m)} \to 0
    $$
    on $\operatorname{Art}_k$, where 
    $$\underline{H^i(X, \mathbf{G}_m)} = \coprod _{H^i(X, \mathbf{G}_m)}\operatorname{Spec}(k)$$
    is the constant group scheme on $H^i(X, \mathbf{G}_m)$. This is a smooth group scheme over $k$, so by Grothendieck's Theorem \cite[Th\'eor\`eme 11.7]{MR244271}, it has no higher cohomology over any $A \in \operatorname{Art}_k$ as all \'etale coverings of a local Artin ring with algebraically closed residue field admit a section. The long exact sequence of cohomology associated to this exact sequence of sheaves proves the isomorphism in (4). The proof that the groups $H^j(A, \Phi^{i}_{fl})$ vanish for $j > 1$ is given by Ekedahl in the proof of \cite[III, Proposition 8.1]{MR800174}, and uses Grothendieck's Theorem \emph{loc. cit.} and the fact that a pro-representable formal group is an extension of a formal Lie group by a connected finite group scheme. In view of this vanishing, we note that (3) follows from the Leray spectral sequence for $\pi$.

    Finally, (2) holds because Artin local rings with algebraically closed residue field have no higher \'etale cohomology, so the Leray spectral sequence for $\pi_{\acute{e}t}$ shows that for all $A \in \operatorname{Art}_k$ and $i \geq 0$, 
    $$
    H^0(\operatorname{Spec}A, R^i\pi_{\acute{e}t, *}\mathbf{G}_m) = H^i(X_A, \mathbf{G}_m)
    $$
    from which (2) follows directly. 
\end{proof}

\begin{remark}
\label{remark-vectorspacestructure}
Taking $A = k[\epsilon]$ in (3) gives an exact sequence of $k$-vector spaces
\begin{equation}
\label{equn-badsubspace}
0 \to H^1(\operatorname{Spec}(k[\epsilon], R^{i-1} \pi_*\mathbf{G}_m) \to \Phi^i(k[\epsilon]) \to \Phi^i_{fl}(k[\epsilon]) \to 0
\end{equation}
and we have $\Phi^i(k[\epsilon]) \cong H^i(X, \mathcal{O}_X)$. See \cite[II.4]{MR457458} or the proof of Lemma \ref{lemma-derivative} below.
\end{remark}

Now slightly generalizing the situation at the beginning of the section, fix $i \geq 0$, $\alpha \in H^i(X, \mathbf{G}_m)$ and consider the map 
$$
a: \operatorname{Aut}_{X} \to R^i\pi_{ *}\mathbf{G}_m, \hspace{3 em} \varphi \mapsto \varphi^*\alpha - \alpha.
$$
This is a map of fppf sheaves of sets respecting the identity elements, so it induces a morphism on tangent spaces
\begin{equation}
\label{equn-derivative}
H^0(X, T_X) = T_{\operatorname{id}_X}\operatorname{Aut}_X = \operatorname{Ker}(\operatorname{Aut}_X(k[\epsilon]) \to \operatorname{Aut}_X(k)) \to \Phi^i_{fl}(k[\epsilon])
\end{equation}
which we will now compute. Let us write $\operatorname{dlog}(\alpha) \in H^i(X, \Omega^1_X)$ for the image of $\alpha$ under the map on \'etale cohomology coming from the morphism of abelian sheaves
$$
\mathbf{G}_m \to \Omega^1, \hspace{3 em} u \mapsto \operatorname{dlog}(u) = du/u
$$
on the small \'etale site $X_{\acute{e}t}$ of $X$. Note that we use Grothendieck's Theorem to identify $H^i(X, \mathbf{G}_m)$ with \'etale cohomology here. 

\begin{lemma}
\label{lemma-derivative}
    The tangent map (\ref{equn-derivative}) factors as
    $$
    H^0(X, T_X)\xrightarrow{\theta \mapsto \theta (\operatorname{dlog}(\alpha))} H^i(X, \mathcal{O}_X) = \Phi^i(k[\epsilon]) \xrightarrow{b} \Phi^i_{fl}(k[\epsilon])
    $$
    where $b$ is the map from (\ref{equn-badsubspace}).  
\end{lemma}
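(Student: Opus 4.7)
My plan is to reduce to an explicit computation at the presheaf level. Given $\theta \in H^0(X, T_X) = T_{\operatorname{id}}\operatorname{Aut}_X$, let $\tilde\theta \in \operatorname{Aut}(X_{k[\epsilon]})$ be the corresponding automorphism, which reduces to the identity modulo $\epsilon$. By construction, the tangent map (\ref{equn-derivative}) evaluated at $\theta$ is the image under $b$ of the class $\tilde\theta^*\alpha - \alpha \in \Phi^i(k[\epsilon]) = \operatorname{Ker}(H^i(X_{k[\epsilon]}, \mathbf{G}_m) \to H^i(X, \mathbf{G}_m))$ in \'etale cohomology; this is exactly the factorization through $\Phi^i(k[\epsilon])$ asserted by the lemma. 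So it suffices to show that the lift $\theta \mapsto \tilde\theta^*\alpha - \alpha$ agrees with $\theta \mapsto \iota_\theta\operatorname{dlog}(\alpha)$ under the identification $\Phi^i(k[\epsilon]) = H^i(X, \mathcal{O}_X)$.

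The identification I would make explicit via a canonical splitting at the sheaf level. Let $p : X_{k[\epsilon]} \to X$ denote the projection. Writing units of $\mathcal{O}_{X_{k[\epsilon]}}$ on an \'etale neighborhood as $u + v\epsilon$, the assignment $u + v\epsilon \mapsto (u, v/u)$ gives an isomorphism of \'etale sheaves of abelian groups on $X$,
$$p_*\mathbf{G}_{m, X_{k[\epsilon]}} \xrightarrow{\ \cong\ } \mathbf{G}_{m,X} \times \mathcal{O}_X,$$
where the right-hand side is a direct product, with $\mathbf{G}_m$ multiplicative and $\mathcal{O}_X$ additive; this encodes the splitting of the short exact sequence $0 \to \mathcal{O}_X \to p_*\mathbf{G}_m \to \mathbf{G}_m \to 0$ arising from the section $k \to k[\epsilon]$. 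Since $X \hookrightarrow X_{k[\epsilon]}$ is a nilpotent closed immersion, $p_*$ is exact on \'etale sheaves, so passing to cohomology yields the canonical decomposition $H^i(X_{k[\epsilon]}, \mathbf{G}_m) = H^i(X, \mathbf{G}_m) \oplus H^i(X, \mathcal{O}_X)$, with $\Phi^i(k[\epsilon])$ the second summand.

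With the splitting in hand, the computation is mechanical. Locally $\tilde\theta^*$ is the $k[\epsilon]$-algebra automorphism determined by $a + b\epsilon \mapsto a + (b + \theta(a))\epsilon$, so
$$\tilde\theta^*(u + v\epsilon) = u + (v + \theta(u))\epsilon = u \cdot \bigl(1 + (v/u + \theta(u)/u)\epsilon\bigr),$$
which in split coordinates reads $\tilde\theta^* : (u, w) \mapsto (u, w + \iota_\theta\operatorname{dlog}(u))$. Hence $\tilde\theta^* - \operatorname{id}$ is the sheaf homomorphism obtained as the composition $\mathbf{G}_m \times \mathcal{O}_X \twoheadrightarrow \mathbf{G}_m \xrightarrow{\operatorname{dlog}} \Omega^1_X \xrightarrow{\iota_\theta} \mathcal{O}_X \hookrightarrow \mathbf{G}_m \times \mathcal{O}_X$. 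Applying $H^i$ to the class $(\alpha, 0) \in H^i(X, \mathbf{G}_m) \oplus H^i(X, \mathcal{O}_X)$ produces $(0, \iota_\theta\operatorname{dlog}(\alpha))$, which under our identification is $\theta(\operatorname{dlog}(\alpha)) \in H^i(X, \mathcal{O}_X)$; composing with $b$ finishes the proof.

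The only real obstacle is the bookkeeping: choosing the splitting of $p_*\mathbf{G}_m$ carefully enough so that the $\epsilon$-perturbation introduced by $\tilde\theta^*$ manifestly lies in the $\mathcal{O}_X$-summand and is given by the $\operatorname{dlog}$-contraction pairing rather than some other first-order quantity. No deeper input is required beyond Grothendieck's theorem identifying \'etale and flat cohomology of $\mathbf{G}_m$.
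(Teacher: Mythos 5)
Your proposal is correct and follows essentially the same route as the paper: both identify $\Phi^i(k[\epsilon])$ with $H^i(X,\mathcal{O}_X)$ via the split truncated exponential sequence (your coordinates $u+v\epsilon \mapsto (u, v/u)$ are exactly the splitting $j \times \exp^{-1}$), and both reduce the statement to the local identity $\tilde\theta^\#(u+v\epsilon)\cdot(u+v\epsilon)^{-1} = 1 + u^{-1}\theta(du)\,\epsilon$ at the level of sheaf homomorphisms before applying $H^i$. No gaps.
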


\begin{proof}
    There are canonical bijections between (a) elements in $H^0(X, T_X)$ viewed as $\mathcal{O}_X$-linear maps $\Omega_X \to \mathcal{O}_X$, (b) automorphisms $X[\epsilon] \to X[\epsilon]$ over $k[\epsilon]$ which equal the identity on the special fiber, and (c)  $k$-linear derivations $\mathcal{O}_X \to \mathcal{O}_X$. Let $\theta \in H^0(X, T_X)$ correspond to $\varphi : X[\epsilon] \to X[\epsilon]$ and $D : \mathcal{O}_X \to \mathcal{O}_X$. In formulas, 
    \begin{equation}
        \label{equn-therelations}
    \varphi^\#(f+g\epsilon) = f + (g+D(f))\epsilon, \hspace{3 em} \theta(df) = D(f),
    \end{equation}
    where $f,g$ are local sections of $\mathcal{O}_X$ and $\varphi^\#$ is the automorphism of sheaves $\mathcal{O}_{X[\epsilon]} \to \mathcal{O}_{X[\epsilon]}$ corresponding to $\varphi$. Then by definition, the map (\ref{equn-derivative}) takes $\theta$ to $\varphi ^*\alpha - \alpha \in \Phi^i_{fl}(k[\epsilon])$. This factors as
    $$
    H^0(X, T_X) \xrightarrow{\theta \mapsto \varphi^*\alpha - \alpha} \Phi^i(k[\epsilon]) \to \Phi^i_{fl}(k[\epsilon]),
    $$
    where the second map is (\ref{equn-badsubspace}), so it suffices to compute the first map. 

    One computes $\Phi^i(k[\epsilon])$ using the truncated exponential sequence
    $$
    0 \to \mathcal{O}_X \xrightarrow{\operatorname{exp}} \mathcal{O}_{X[\epsilon]}^\times \to \mathcal{O}_X^\times \to 0, 
    $$
    which we may view as a sequence of sheaves on $X_{\acute{e}t}$. 
    As the sequence is split by the inclusion $j : \mathcal{O}_X^\times \to \mathcal{O}_{X[\epsilon]}^\times$, we obtain for all $i \geq 0$ short exact sequences
    $$
    0 \to H^i(X, \mathcal{O}_X) \to H^i(X[\epsilon], \mathbf{G}_m) \to H^i(X, \mathbf{G}_m) \to 0
    $$
    and hence isomorphisms $H^i(X, \mathcal{O}_X) \cong \Phi^i(k[\epsilon])$. In view of this, the formula we must prove is $\varphi^*j\xi -j\xi = \operatorname{exp}(\theta(\operatorname{dlog}(\xi)))$. For this, it suffices to show that $(\varphi^\sharp \circ j) \cdot (1/j) = \operatorname{exp}\circ \theta\circ \operatorname{dlog}$ as maps of sheaves $\mathcal{O}_X^\times \to \mathcal{O}_{X[\epsilon]}$ (note that while the operation in $\mathbf{G}_m$ is written multiplicatively, the operation in $H^i(-, \mathbf{G}_m)$ is written additively).

    Given a local section $u$ of $\mathcal{O}_X^\times$, we  have 
    $$
    \varphi^\# j(u) \cdot j(u)^{-1} = (u + D(u) \epsilon ) \cdot u^{-1} = 1+ u^{-1}D(u) \epsilon
    $$
    by the relations (\ref{equn-therelations}), while
    $$
    \operatorname{exp}(\theta (\operatorname{dlog}(u))) = \operatorname{exp}(u^{-1}\theta(du)) = \operatorname{exp}(u^{-1}\cdot D(u)) = 1 + u^{-1}D(u)\epsilon ,
    $$
    as needed.
\end{proof}

\section{Positive results}
\label{section-sufficient}

Let $X$ be a smooth proper variety over an algebraically closed field $k$ of characteristic $p>0$. Let $\mathcal{X} \to X$ be a $\mathbf{G}_m$-gerbe with cohomology class $\alpha \in H^2(X, \mathbf{G}_m)$. Denote by $\pi$ the structure morphism $(\operatorname{Sch}/X)_{fppf} \to (\operatorname{Sch}/k)_{fppf}$. Then by \cite[Proposition 1.4]{MR1608805}, $\alpha$ is torsion. Let $n$ be the least positive integer such that $n \alpha = 0$. Write $n = p^i m$ with $p \nmid m$. Then we can uniquely write 
$$
\alpha = \alpha_1 + \alpha_2
$$
where $p^i\alpha_1 = 0$ and $m \alpha_2 = 0$. 

\begin{lemma}
\label{lemma-primetoporpower}
    The morphism $a : \operatorname{Aut}_X \to R^2\pi_*\mathbf{G}_m,$ $\varphi \mapsto \varphi^*\alpha - \alpha$, factors 
    $$
    \operatorname{Aut}_X \to R^2 \pi_* \mathbf{G}_m[p^i] \oplus R^2\pi_*\mathbf{G}_m[m] = R^2\pi_*\mathbf{G}_m[n] \hookrightarrow R^2\pi_*\mathbf{G}_m,
    $$
    where the map $\operatorname{Aut}_X \to R^2 \pi_* \mathbf{G}_m[p^i]$ is given by $\varphi \mapsto \varphi^*\alpha_1 - \alpha_1$ and the map $\operatorname{Aut}_X \to  R^2\pi_*\mathbf{G}_m[m]$ is given by $\varphi \mapsto \varphi^*\alpha_2 - \alpha_2$.\qed
\end{lemma}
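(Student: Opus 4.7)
The plan is to observe that the whole statement is a formal consequence of the decomposition $\alpha = \alpha_1 + \alpha_2$ together with the standard fact that for coprime integers $a,b$ the $ab$-torsion of any abelian group splits canonically as the direct sum of its $a$-torsion and $b$-torsion.

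First I would fix the setup. Since $\gcd(p^i, m) = 1$ and $n = p^i m$, Bezout gives $u, v \in \mathbf{Z}$ with $u p^i + v m = 1$, and one checks that $\alpha_1 := vm\alpha$ and $\alpha_2 := up^i\alpha$ is the unique decomposition with $p^i \alpha_1 = 0$ and $m \alpha_2 = 0$. In particular $\alpha_1, \alpha_2$ are integer multiples of $\alpha$, so the map $a$ can be unwound term by term. More precisely, for any $k$-scheme $T$ and any $\varphi \in \operatorname{Aut}_X(T)$, pulling back by $\varphi$ is a group homomorphism $H^2(X_T, \mathbf{G}_m) \to H^2(X_T, \mathbf{G}_m)$, so
\[
a(\varphi) = \varphi^*\alpha - \alpha = (\varphi^*\alpha_1 - \alpha_1) + (\varphi^*\alpha_2 - \alpha_2).
\]

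Next I would verify the torsion orders. Since $p^i \alpha_1 = 0$ already in $H^2(X, \mathbf{G}_m)$, its pullback to $H^2(X_T, \mathbf{G}_m)$ is also killed by $p^i$, and then $\varphi^*\alpha_1$ is killed by $p^i$ because $\varphi^*$ is additive. Hence $\varphi^*\alpha_1 - \alpha_1$ lies in $R^2\pi_*\mathbf{G}_m[p^i](T)$. The same argument with $m$ shows $\varphi^*\alpha_2 - \alpha_2$ lies in $R^2\pi_*\mathbf{G}_m[m](T)$. This defines the two factor maps in the statement and shows that $a$ factors through $R^2\pi_*\mathbf{G}_m[p^i] \oplus R^2\pi_*\mathbf{G}_m[m]$.

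Finally I would justify the identification $R^2\pi_*\mathbf{G}_m[p^i] \oplus R^2\pi_*\mathbf{G}_m[m] = R^2\pi_*\mathbf{G}_m[n]$. This is the usual sheafified Chinese Remainder Theorem: for any abelian sheaf $\mathcal{F}$ and coprime integers $a, b$ one has a natural isomorphism $\mathcal{F}[a] \oplus \mathcal{F}[b] \xrightarrow{\cong} \mathcal{F}[ab]$, where the inverse sends a section $s$ with $abs = 0$ to $(vbs,\, uas)$ for any $u, v \in \mathbf{Z}$ with $ua + vb = 1$. Applied sectionwise to $\mathcal{F} = R^2\pi_*\mathbf{G}_m$ with $a = p^i$, $b = m$, this gives the displayed equality, and the inclusion $R^2\pi_*\mathbf{G}_m[n] \hookrightarrow R^2\pi_*\mathbf{G}_m$ is tautological.

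There is really no serious obstacle; the only thing to be careful about is to keep the sheaf-theoretic viewpoint and use that pullback along an automorphism is an additive $\mathbf{Z}$-linear endomorphism of the cohomology group on each test scheme $T$, which is immediate from the fact that $\mathbf{G}_m$-cohomology is functorial in the base and that $\varphi \in \operatorname{Aut}_X(T)$ acts by a group automorphism of $H^2(X_T, \mathbf{G}_m)$.
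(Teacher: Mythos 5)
Your argument is correct and is exactly the routine verification the paper has in mind: the paper marks this lemma with \qed and gives no proof, treating it as immediate from the additivity of $\varphi^*$ and the Chinese Remainder splitting of $n$-torsion into $p^i$- and $m$-torsion. Your write-up (including the explicit $\alpha_1 = vm\alpha$, $\alpha_2 = up^i\alpha$ from Bezout and the sectionwise CRT for the sheaf $R^2\pi_*\mathbf{G}_m$) supplies precisely those omitted details, so there is nothing to correct.
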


This allows us to treat the cases $n = p^i$ and $p \nmid n$ separately. Olsson shows \cite[Theorem 1.1(3)]{MR4961251} that when $p \nmid n$, the restriction $a|_{\operatorname{Aut}^0_X} : \operatorname{Aut}^0_X \to R^2\pi_*\mathbf{G}_m$ is zero: It factors through the \'etale $k$-scheme $R^2\pi_*\mu_n$ and therefore must be constant since $\operatorname{Aut}^0_X$ is connected. Thus we are reduced to understanding the case $n = p^i$. 

\begin{thm}
\label{thm-sufficientconditions}
Assume any of the following hold:
\begin{enumerate}
    \item  $H^0(X, T_X) = 0$,
    \item $H^2(X, \mathcal{O}_X) = 0$, or more generally $H^1(k[\epsilon], \operatorname{Pic}_X) = H^2(X, \mathcal{O}_X)$, 
    \item The commutative $k$-group scheme $R^2\pi_*\mu_{p^i}$ is of multiplicative type ($p^i$ as above), or
    \item $\operatorname{Aut}^0_X$ is an abelian variety.
\end{enumerate}
Then the restriction $a|_{\operatorname{Aut}^0_X} : \operatorname{Aut}^0_X \to R^2 \pi_*\mathbf{G}_m$ is zero and so 
$
\operatorname{Aut}^0_{\mathcal{X}} \to \operatorname{Aut}^0_X
$
is surjective.
\end{thm}

\begin{examples}
\begin{enumerate}
    \item If $X$ is a K3 surface, then (1) holds so $\operatorname{Aut}^0_{\mathcal{X}} \to \operatorname{Aut}^0_X$ is surjective for any $\mathbf{G}_m$-gerbe $\mathcal{X} \to X$.
    \item If $X$ is an abelian variety, then (4) holds, so $\operatorname{Aut}^0_{\mathcal{X}} \to \operatorname{Aut}^0_X$ is surjective for any $\mathbf{G}_m$-gerbe $\mathcal{X} \to X$.
    \item When $k$ has characteristic 2 and $X$ is a classical Enriques surface over $k$, then $H^2(X, \mathbf{G}_m) \cong \mathbf{Z}/2$, and if $\alpha \in H^2(X, \mathbf{G}_m)$ is the non-zero class, then $0 \neq \operatorname{dlog}(\alpha) \in H^2(X, \Omega^1_X)$. However, $H^2(X, \mathcal{O}_X) = 0$, so $\operatorname{Aut}^0_{\mathcal{X}} \to \operatorname{Aut}^0_X$ is surjective for the $\mathbf{G}_m$-gerbe $\mathcal{X} \to X$ with class $\alpha$. See \cite[Section 9]{Antieau_Bhatt_Mathew_2021}.
    \end{enumerate}
\end{examples}

\begin{proof}
    The second half of the conclusion follows from the first by Lemma \ref{lemma-exactsequence}. Thus we need only prove $a|_{\operatorname{Aut}^0_X}$ is zero. By Lemma \ref{lemma-primetoporpower}, \cite[Theorem 1.1(3)]{MR4961251}, and the discussion above, we may assume the order $n$ of $\alpha$ satisfies $n = p^i$ for some integer $i > 0$. 
    
    If (1) holds, then $\operatorname{Aut}^0_X = 1$ is trivial, so certainly the result is true.

    Suppose (2) holds. Consider the Kummer exact sequence
    $$
    0 \to \operatorname{Pic}_X/p^i\operatorname{Pic}_X \to R^2\pi_*\mu_{p^i} \to R^2\pi_*\mathbf{G}_m[p^i] \to 0.
    $$
    By \cite[Corollary 1.6]{bragg2025representabilitycohomologyfiniteflat}, the sheaf $R^2\pi_*\mu_{p^i}$ is representable by an affine group scheme of finite type over $k$. Also, the group scheme $\operatorname{Pic}_X /p^i\operatorname{Pic}_X$ is finite over $k$: It suffices to show it has only finitely many $k$-points, which is true by the Theorem of the Base (note that $\operatorname{Pic}^0_{X, red}$ is an abelian variety so multiplication by $p^i$ is surjective on $\operatorname{Pic}^0_X(k)$). Thus $R^2\pi_*\mathbf{G}_m[p^i]$ is representable by an affine group scheme of finite type over $k$, being a quotient of an affine group scheme by a normal closed subgroup scheme. Furthermore, the Lie algebra of $R^2\pi_*\mathbf{G}_m[p^i]$ is equal to the Lie algebra of $R^2 \pi_*\mathbf{G}_m$, that is, $\Phi^2_{fl}(k[\epsilon]),$ see Section \ref{section-prelims}. This is because multiplication by $p$ acts trivially on the Lie algebra of a prorepresentable formal group. But the hypotheses imply that $\Phi^2_{fl}(k[\epsilon]) = 0$ by Theorem \ref{thm-raynaud}. Since $R^2\pi_*\mathbf{G}_m[p^i]$ is a finite type group scheme over $k$ whose Lie algebra is trivial, its connected component of the identity is equal to $\operatorname{Spec}(k)$, and so $a|_{\operatorname{Aut}^0_X}$ is equal to zero, as needed. 

    For (3) or (4), by the Kummer sequence, there is $\beta \in H^2(X, \mu_{p^i})$ mapping to $\alpha$, and then the map $a|_{\operatorname{Aut}^0_X}$ factors through the map
    $$
    \operatorname{Aut}^0_X \to R^2\pi_*\mu_{p^i}, \hspace{3 em} \varphi \mapsto \varphi^*\beta - \beta.
    $$
    Again by \cite[Corollary 1.6]{bragg2025representabilitycohomologyfiniteflat}, the sheaf $R^2\pi_*\mu_{p^i}$ is representable by an affine group scheme of finite type over $k$. If (3) holds, then the action 
    $$
    \operatorname{Aut}^0_X \to \underline{\operatorname{Hom}}(R^2\pi_*\mu_{p^i}, R^2\pi_*\mu_{p^i})
    $$
    is a map from a connected group scheme to an \'etale group scheme (using that $R^2\pi_*\mu_{p^i}$ is of multiplicative type) and so is constant, proving the result in this case. Finally, if (4) holds, then $\operatorname{Aut}^0_X \to R^2\pi_*\mu_{p^i}$ is a morphism from an abelian variety over $k$ to an affine $k$-scheme, and hence factors through $\operatorname{Spec}(k)$, proving the result.
    \end{proof}

\begin{corollary}
\label{corollary-applicationtoabelianvarieties}
    Suppose $X, Y$ are abelian varieties over $k$, $\alpha \in \operatorname{Br}(X), \beta \in \operatorname{Br}(Y)$, and there is a Fourier--Mukai equivalence $D^b_{\operatorname{Coh}}(X, \alpha) \cong D^b_{\operatorname{Coh}}(Y, \beta)$ (see \cite[Definition 1.3]{MR4961251}). Then $X$ and $Y$ are isogenous.
\end{corollary}

\begin{proof}
Let $\mathcal{X} \to X, \mathcal{Y} \to Y$ denote the $\mathbf{G}_m$-gerbes corresponding to $\alpha, \beta$. By \cite[Theorem 1.9]{MR4961251}, there is an isomorphism of $k$-group algebraic spaces $\operatorname{Aut}^0_{\mathcal{X}} \cong \operatorname{Aut}^0_{\mathcal{Y}}$. We claim that the sequences
$$
0 \to \operatorname{Pic}^0_X \to \operatorname{Aut}^0_{\mathcal{X}} \to \operatorname{Aut}^0_X \to 0
$$
$$
0 \to \operatorname{Pic}^0_Y \to \operatorname{Aut}^0_{\mathcal{Y}} \to \operatorname{Aut}^0_Y \to 0
$$
coming from Olsson's sequence (\ref{equn-olssonsequence}) are exact. Exactness on the right is by Theorem \ref{thm-sufficientconditions}(4). Exactness on the left is essentially a consequence of the fact that the Neron--Severi group of an abelian variety is torsion free. Write $G = \operatorname{Aut}^0_{\mathcal{X}} \cap \operatorname{Pic}_X$ for the kernel of $\operatorname{Aut}^0_{\mathcal{X}} \to \operatorname{Aut}^0_X$  and consider the inclusions of group algebraic spaces
$$
\begin{tikzcd}
   \operatorname{Pic}^0_X \ar[r, hook] &G \ar[r, hook] \ar[d, hook] &\operatorname{Aut}^0_{\mathcal{X}} \ar[d, hook] \\
    &\operatorname{Pic}_X \ar[r, hook] & \operatorname{Aut}_{\mathcal{X}}.
\end{tikzcd}
$$
Then $G$ is quasi-compact being a closed subgroup space of $\operatorname{Aut}^0_{\mathcal{X}}$ and $G/\operatorname{Pic}^0_X$ is \'etale since $G$ is a closed and open subgroup scheme of $\operatorname{Pic}_X$. Thus $G/\operatorname{Pic}^0_X$ being a quasi-compact closed subgroup scheme of the \'etale group scheme $\operatorname{Pic}_X /\operatorname{Pic}^0_X$ is finite \'etale, and thus trivial since $\operatorname{NS}(X)$ is torsion free. The same arguments of course also apply to $Y$.

Now by Poincar\'e's Complete Reducibility Theorem, $\operatorname{Aut}^0_{\mathcal{X}}$ (resp. $\operatorname{Aut}^0_{\mathcal{Y}}$) is isogenous to $\operatorname{Aut}^0_X \times \operatorname{Pic}^0_X = X \times X^t$ (resp. $Y \times Y^t$). Since an abelian variety is isogenous to its dual, we obtain that $X \times X$ and $Y \times Y$ are isogenous, and then using Poincar\'e's Complete Reducibility Theorem again to decompose $X, Y$ into products of simple abelian varieties (up to isogeny), we see that $X$ and $Y$ are isogenous. 
\end{proof}

\section{Aside: Complete and torsion groups}
\label{section-completetorsion}

This section is used only in the proof of Theorem \ref{lemma-cofinite} which is not needed for the proofs of the other main results of the paper, so can be safely skipped by most readers. We recall here a special case of a result sometimes known as Greenlees--May duality. Fix a prime number $p>0$. Our goal is to give a criterion for a $p$-power torsion abelian group to be isomorphic to $(\mathbf{Q}_p/\mathbf{Z}_p)^{\oplus e} \oplus F$ where $e\geq 0$ is an integer and $F$ is a finite abelian $p$-group. Applying this to the $p$-power torsion subgroup of $H^i(S, \mathbf{G}_m)$ for a scheme $S$, we obtain a generalization of a result of \cite{MR4936530} on the $p$-power torsion subgroup of the Brauer group of an ordinary variety, see Theorems \ref{prop-cofinite} and \ref{lemma-cofinite}. 

For an abelian group $A$, write $A[p^{\infty}] = \bigcup_{n \geq 1} A[p^n]$ for the $p$-power torsion subgroup of $A$. We say $A$ is $p$-power torsion if $A = A[p^{\infty}]$. Denote by $D_{p^{\infty}-tors}(\mathbf{Z})$ the full subcategory of $D(\mathbf{Z})$ (the unbounded derived category of the category of abelian groups) whose objects $K$ have the property that $H^i(K)$ is $p$-power torsion for all $i \in \mathbf{Z}$. Thus $K \in D_{p^{\infty}-tors}(\mathbf{Z})$ if and only if $K \otimes^{\mathbf{L}} \mathbf{Z}[1/p] = 0$. The inclusion $D_{p^{\infty}-tors}(\mathbf{Z}) \to D(\mathbf{Z})$ admits a right adjoint denoted $R \Gamma_p$ and defined by $R\Gamma_p(K) = K \otimes^{\mathbf{L}}\operatorname{Cone}(\mathbf{Z} \to \mathbf{Z}[1/p])[-1]$. See \cite[\href{https://stacks.math.columbia.edu/tag/0A6R}{Tag 0A6R}]{stacks-project}.

An object $K \in D(\mathbf{Z})$ is called \emph{derived $p$-complete} if $R\operatorname{Hom}_{\mathbf{Z}}(\mathbf{Z}[1/p], K) = 0$. We denote by $D_{p-comp}(\mathbf{Z})$ the full subcategory consisting of derived $p$-complete objects. The inclusion $D_{p-comp}(\mathbf{Z})\to D(\mathbf{Z})$ admits a left 
adjoint denoted $(-)^\wedge$ where $K^\wedge = R \operatorname{Hom}(\operatorname{Cone}(\mathbf{Z} \to \mathbf{Z}[1/p]), K)$. See \cite[\href{https://stacks.math.columbia.edu/tag/091V}{Tag 091V}]{stacks-project}. This functor is called \emph{derived $p$-completion}. If $K = A[0]$ with $A$ an abelian group, then we have $H^{-1}(K^\wedge) = T_pA$ where $T_pA = \operatorname{lim}_{n}A[p^n]$ is the $p$-adic Tate module, there is a short exact sequence
$$
0 \to \operatorname{lim}^1 _{n}A[p^n] \to H^0(K^\wedge) \to \widehat{A} \to 0 
$$
where $\widehat{A} = \operatorname{lim}_n A/p^nA$ is the ordinary $p$-adic completion, and $H^i(K^\wedge) = 0$ for $i \neq -1, 0$. See \cite[\href{https://stacks.math.columbia.edu/tag/0BKG}{Tag 0BKG}]{stacks-project}. It is also true that for $K \in D(\mathbf{Z})$, we have $K \in D_{p-comp}(\mathbf{Z}) \iff H^i(K) \in D_{p-comp}(\mathbf{Z})$ for all $i \in \mathbf{Z}$, see \cite[\href{https://stacks.math.columbia.edu/tag/091P}{Tag 091P}]{stacks-project}.

The following is a very special case of the results \cite{Dwyer-Greenlees}, \cite{Porta-Liran-Yekutieli}, {\cite[\href{https://stacks.math.columbia.edu/tag/0A6X}{Tag 0A6X}]{stacks-project}}. 

\begin{thm}[Dwyer, Greenlees, May]
\label{thm-gmduality}
    The functors $R \Gamma_p : D(\mathbf{Z}) \to D_{p^{\infty}-tors}(\mathbf{Z})$ and $(-)^\wedge : D(\mathbf{Z}) \to D_{p-comp}(\mathbf{Z})$ induce mutually quasi-inverse equivalences of categories 
    $$
    R \Gamma_p : D_{p-comp}(\mathbf{Z}) \longleftrightarrow D_{p^{\infty}-tors}(\mathbf{Z}) : (-)^\wedge.
    $$
\end{thm}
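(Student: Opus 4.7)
The plan is to invoke the adjunctions stated just before the theorem. Since $R\Gamma_p$ is right adjoint to the fully faithful inclusion $\iota: D_{p^{\infty}-tors}(\mathbf{Z}) \hookrightarrow D(\mathbf{Z})$ and $(-)^\wedge$ is left adjoint to the fully faithful inclusion $j: D_{p-comp}(\mathbf{Z}) \hookrightarrow D(\mathbf{Z})$, the composites $R\Gamma_p \circ \iota$ and $(-)^\wedge \circ j$ are naturally isomorphic to the respective identity functors. It therefore suffices to verify that for every $M \in D(\mathbf{Z})$ the natural maps
$$R\Gamma_p(M) \to R\Gamma_p(M^\wedge) \quad \text{and} \quad (R\Gamma_p M)^\wedge \to M^\wedge$$
are quasi-isomorphisms: then $K \in D_{p-comp}(\mathbf{Z})$ yields $(R\Gamma_p K)^\wedge \simeq K^\wedge \simeq K$, and $K \in D_{p^{\infty}-tors}(\mathbf{Z})$ yields $R\Gamma_p(K^\wedge) \simeq R\Gamma_p(K) \simeq K$, exhibiting the two functors as mutually quasi-inverse.

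Both reductions hinge on the following key vanishing: if $N \in D(\mathbf{Z})$ is $\mathbf{Z}[1/p]$-linear (i.e., multiplication by $p$ is a quasi-isomorphism on $N$, equivalently $N \otimes^{\mathbf{L}} \mathbf{Z}[1/p] \simeq N$), then $R\Gamma_p(N) = 0$ and $N^\wedge = 0$. The first is immediate from $R\Gamma_p N = N \otimes^{\mathbf{L}} \operatorname{Cone}(\mathbf{Z} \to \mathbf{Z}[1/p])[-1]$, because the cone is annihilated by $- \otimes^{\mathbf{L}} \mathbf{Z}[1/p]$. For the second, apply $R\operatorname{Hom}(-, N)$ to the triangle $\mathbf{Z} \to \mathbf{Z}[1/p] \to \operatorname{Cone}(\mathbf{Z} \to \mathbf{Z}[1/p]) \to +1$ to obtain a distinguished triangle whose relevant edge is the map $R\operatorname{Hom}(\mathbf{Z}[1/p], N) \to R\operatorname{Hom}(\mathbf{Z}, N) = N$; when $N$ is $\mathbf{Z}[1/p]$-linear, this map is the identity by the extension-of-scalars adjunction $R\operatorname{Hom}_{\mathbf{Z}}(\mathbf{Z}[1/p], N) = R\operatorname{Hom}_{\mathbf{Z}[1/p]}(\mathbf{Z}[1/p], N) = N$, so $N^\wedge$ vanishes.

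Given the lemma, the isomorphism $(R\Gamma_p M)^\wedge \simeq M^\wedge$ follows by applying $(-)^\wedge$ to the triangle $R\Gamma_p M \to M \to M \otimes^{\mathbf{L}} \mathbf{Z}[1/p] \to +1$, whose third term is manifestly $\mathbf{Z}[1/p]$-linear. For $R\Gamma_p(M) \simeq R\Gamma_p(M^\wedge)$, consider the (co)fiber $C$ of the unit map $M \to M^\wedge$: applying $(-)^\wedge$ to the resulting triangle and using the idempotency $(M^\wedge)^\wedge \simeq M^\wedge$ shows $C^\wedge = 0$, and then the defining triangle for $C^\wedge$ forces $R\operatorname{Hom}_{\mathbf{Z}}(\mathbf{Z}[1/p], C) \xrightarrow{\sim} C$, so $C$ carries a $\mathbf{Z}[1/p]$-module structure and is in particular $\mathbf{Z}[1/p]$-linear; applying $R\Gamma_p$ then yields the desired isomorphism. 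The only genuine subtlety is keeping track of shifts in the triangles above; everything else is purely formal, and one could equivalently just cite \href{https://stacks.math.columbia.edu/tag/0A6X}{Tag 0A6X}, as the paper does.
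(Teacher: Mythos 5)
Your argument is correct, but note that the paper does not actually prove this statement: it is quoted as a known result, ``a very special case'' of Dwyer--Greenlees, Porta--Shaul--Yekutieli, and Stacks Project Tag 0A6X, and the burden of proof is discharged by citation. What you have written is a sound self-contained reconstruction of the standard argument from those sources: reduce to the two claims that $R\Gamma_p(M)\to R\Gamma_p(M^\wedge)$ and $(R\Gamma_p M)^\wedge\to M^\wedge$ are isomorphisms for all $M$, and deduce both from the key vanishing that $R\Gamma_p$ and $(-)^\wedge$ kill $\mathbf{Z}[1/p]$-linear objects, applied to the third terms of the triangles $R\Gamma_p M\to M\to M\otimes^{\mathbf{L}}\mathbf{Z}[1/p]$ and $M\to M^\wedge\to C$. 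The individual steps all check out: the unit/counit isomorphisms for the fully faithful inclusions, the computation $\mathbf{Z}[1/p]\otimes^{\mathbf{L}}\operatorname{Cone}(\mathbf{Z}\to\mathbf{Z}[1/p])=0$, the identification $R\operatorname{Hom}_{\mathbf{Z}}(\mathbf{Z}[1/p],N)\simeq N$ for $\mathbf{Z}[1/p]$-linear $N$, and the deduction that the cofiber of $M\to M^\wedge$ is $\mathbf{Z}[1/p]$-linear from the vanishing of its completion. You are right that the only care needed is with the shift conventions (the paper's displayed formulas for $R\Gamma_p$ and $(-)^\wedge$ are themselves only correct up to a consistent choice of shift on $\operatorname{Cone}(\mathbf{Z}\to\mathbf{Z}[1/p])$, as otherwise the adjunction $R\Gamma_p(K)\otimes$-formula and the $R\operatorname{Hom}$-formula for $(-)^\wedge$ would not be adjoint to the respective inclusions). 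So your proposal supplies a proof where the paper supplies a reference, and it is the same proof one finds in the cited sources.
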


\begin{example}
\label{example-dualityforzp}
    One computes directly that $R\Gamma_p(\mathbf{Z}_p) = \mathbf{Q}_p/\mathbf{Z}_p[-1]$, and $(\mathbf{Q}_p/\mathbf{Z}_p)^\wedge = T_p(\mathbf{Q}_p/\mathbf{Z}_p)[1] = \mathbf{Z}_p[1]$. If $F$ is a finitely generated, torsion $\mathbf{Z}_p$-module (i.e. a finite abelian $p$-group) then $F^\wedge \cong F$ and $R\Gamma_p(F) \cong F$. 
\end{example}

The following is the promised characterization.

\begin{lemma}
\label{lemma-characterization}
    Let $A$ be a $p$-power torsion abelian group. The following are equivalent.
\begin{enumerate}
    \item There is an integer $e \geq 0$ and a finite abelian $p$-group $F$ such that $A \cong (\mathbf{Q}_p/\mathbf{Z}_p)^{\oplus e} \oplus F$. 
    \item The groups $A[p]$ and $A/pA$ are finite. 
    \item $T_pA$ is a finite free $\mathbf{Z}_p$-module, $\widehat{A}$ is a finite $\mathbf{Z}_p$-module, and the system $\{A[p^n]\}_{n \geq 1}$ satisfies the Mittag--Leffler condition (ML).
\end{enumerate}
\end{lemma}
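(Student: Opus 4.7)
The plan is to establish the cycle $(1) \Rightarrow (2) \Rightarrow (3) \Rightarrow (1)$. The implication $(1) \Rightarrow (2)$ is immediate from the structure: if $A \cong (\mathbf{Q}_p/\mathbf{Z}_p)^{\oplus e} \oplus F$, then $A[p] \cong (\mathbf{Z}/p)^{\oplus e} \oplus F[p]$ and $A/pA \cong F/pF$ are both finite, since $\mathbf{Q}_p/\mathbf{Z}_p$ is $p$-divisible.

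For $(2) \Rightarrow (3)$, the first step is to show by induction on $n$ that $A[p^n]$ is finite, using that multiplication by $p^{n-1}$ induces an injection $A[p^n]/A[p^{n-1}] \hookrightarrow A[p]$. The system $\{A[p^n]\}$ thus consists of finite groups and satisfies Mittag--Leffler trivially. Next, to show $T_pA \cong \mathbf{Z}_p^e$, observe that the map $T_pA \to A[p]$ sending $(a_n) \mapsto a_1$ has kernel exactly $pT_pA$ (if $a_1 = 0$, the shifted sequence $(a_{n+1})$ itself lies in $T_pA$ and multiplies to $(a_n)$ under $p$), so $T_pA/pT_pA \hookrightarrow A[p]$ is finite. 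Combined with $T_pA$ being $p$-torsion-free (if $p(a_n) = 0$, then $a_n = p a_{n+1} \in pA[p] = 0$) and derived $p$-complete, topological Nakayama \cite[\href{https://stacks.math.columbia.edu/tag/0G1U}{Tag 0G1U}]{stacks-project} forces $T_pA \cong \mathbf{Z}_p^e$ for some $e \geq 0$.

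To show $\widehat{A}$ is finite, I would invoke Greenlees--May duality. The derived $p$-completion $K = A[0]^\wedge$ sits in a truncation triangle $T_pA[1] \to K \to \widehat{A}$ (using Mittag--Leffler so that $\lim^1 = 0$), which splits because $\operatorname{Ext}^2_{\mathbf{Z}}$ vanishes. Since derived $p$-completion preserves mod-$p$ reduction, $K \otimes^L \mathbf{F}_p = A \otimes^L \mathbf{F}_p$, and a short computation with the long exact sequence yields $\widehat{A}/p\widehat{A} = A/pA$, which is finite by hypothesis. Topological Nakayama then gives $\widehat{A} \cong \mathbf{Z}_p^r \oplus F'$ with $F'$ finite. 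Applying $R\Gamma_p$ to the splitting $K \cong \mathbf{Z}_p^e[1] \oplus \widehat{A}$, using Example \ref{example-dualityforzp} and the equivalence $R\Gamma_p(K) = A[0]$ from Theorem \ref{thm-gmduality}, yields
$$A[0] \cong (\mathbf{Q}_p/\mathbf{Z}_p)^e \oplus (\mathbf{Q}_p/\mathbf{Z}_p)^r[-1] \oplus F'.$$
Comparing $H^1$ on both sides forces $r = 0$, so $\widehat{A} = F'$ is finite, establishing $(3)$.

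For $(3) \Rightarrow (1)$, with $T_pA \cong \mathbf{Z}_p^e$ and $\widehat{A} = F$ finite assumed, the same splitting gives $K \cong \mathbf{Z}_p^e[1] \oplus F$, and applying $R\Gamma_p$ yields $A[0] = (\mathbf{Q}_p/\mathbf{Z}_p)^e \oplus F$. The main obstacle is upgrading finite generation of $\widehat{A}$ to actual finiteness in $(2) \Rightarrow (3)$; the key insight is that Greenlees--May duality translates a free $\mathbf{Z}_p$ summand of $\widehat{A}$ into a $\mathbf{Q}_p/\mathbf{Z}_p$ contribution to $H^1(A[0])$, which must vanish since $A$ is concentrated in degree $0$.
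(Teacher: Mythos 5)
Your argument is correct and essentially the paper's: both establish $(2)\Rightarrow(3)$ by elementary Tate-module and completion manipulations (the paper uses the classical completeness criterion where you use derived-complete Nakayama, but the content is the same), and both prove the remaining implication via Greenlees--May duality together with $R\Gamma_p(\mathbf{Z}_p) = \mathbf{Q}_p/\mathbf{Z}_p[-1]$, so that a free summand of $\widehat{A}$ would contribute to $H^1$ of $A[0]$ and must therefore vanish. The one adjustment needed is that condition (3) asserts $\widehat{A}$ is a \emph{finitely generated} $\mathbf{Z}_p$-module, not a finite group, so your proof of $(3)\Rightarrow(1)$ cannot start from ``$\widehat{A} = F$ finite''; you must allow $\widehat{A} \cong \mathbf{Z}_p^{\oplus r} \oplus F$ and rerun the $H^1$-comparison you already carried out in the preceding paragraph to force $r = 0$ --- which is exactly where the paper places that step.
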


\begin{proof}
    The implication (1) $\implies$ (2) can be checked directly. Assume (2) holds. One proves by induction that the groups $A[p^n]$ are finite for all $n \geq 1$ using the exact sequences
    $$
    0 \to A[p] \to A[p^{n}] \xrightarrow{p} A[p^{n-1}].
    $$
    Then it follows that the system $\{A[p^n]\}_{n \geq 1}$ satisfies ML. Furthermore, $T_pA$ is $p$-torsion free and $p$-adically complete (and separated in Bourbaki terminology): For each $n \geq 1$ there is a short exact sequence
    $$
    0 \to T_pA \xrightarrow{p^n} T_pA \to A[p^n] \to 0,
    $$
    from which one sees that the system $T_pA/p^nT_pA$ is isomorphic to the system $A[p^n]$ which has limit $T_pA$. Thus by the well-known criterion \cite[\href{https://stacks.math.columbia.edu/tag/031D}{Tag 031D}]{stacks-project}, to show that $T_pA$ is a finite free $\mathbf{Z}_p$-module, it suffices to show $T_pA/pT_pA \cong A[p]$ is finite, which follows from the assumptions. By the same criterion, the fact that $A/pA$ is finite implies that $\widehat{A}$ is a finite $\mathbf{Z}_p$-module.

    Finally assume (3) holds. Then using \cite[\href{https://stacks.math.columbia.edu/tag/0BKG}{Tag 0BKG}]{stacks-project} discussed above we compute
    $$
    A^\wedge \cong  T_pA [1] \oplus \widehat{A} \cong \mathbf{Z}_p^{\oplus e}[1] \oplus \mathbf{Z}_p^{\oplus e'} \oplus F
    $$
    for some $e, e' \geq 0$ and finite abelian $p$-group $F$ (where we use that every object in $D(\mathbf{Z})$ is isomorphic to the direct sum of its cohomology groups appropriately shifted, and every finite $\mathbf{Z}_p$-module is a direct sum of a free module and a torsion module). By Theorem \ref{thm-gmduality} and Example \ref{example-dualityforzp} we have
    $$
    A \cong R\Gamma_p(A^\wedge) = (\mathbf{Q}_p/\mathbf{Z}_p)^{\oplus e} \oplus F \oplus (\mathbf{Q}_p/\mathbf{Z}_p)^{\oplus e'}[-1].
    $$
    This implies $e' = 0$. It follows that (1) holds. 
\end{proof}

\begin{corollary}
\label{cor-tocriterion}
    Let $A$ satisfy the equivalent conditions of Lemma \ref{lemma-characterization}. Then $e$ is the rank of the finite free $\mathbf{Z}_p$-module $T_pA$ and $F \cong \widehat{A}$. There is also an isomorphism
    $$
    \operatorname{Coker}(T_pA \to A[p]) \cong F[p].
    $$
\end{corollary}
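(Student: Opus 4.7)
The plan is to use the isomorphism $A \cong (\mathbf{Q}_p/\mathbf{Z}_p)^{\oplus e} \oplus F$ from Lemma \ref{lemma-characterization}(1) and read off each of the three claimed invariants by computing directly on the two summands. Both $T_p(-)$ and ordinary $p$-adic completion $\widehat{(-)}$ are inverse limits, so they commute with finite direct sums; similarly, the natural evaluation map $T_pA \to A[p]$, $(a_n)_{n\geq 1} \mapsto a_1$, is functorial in $A$ and hence respects the direct sum decomposition. So everything reduces to computations on $\mathbf{Q}_p/\mathbf{Z}_p$ and on a finite abelian $p$-group.

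For the first two statements, I would use that $T_p(\mathbf{Q}_p/\mathbf{Z}_p) = \mathbf{Z}_p$, as recorded in Example \ref{example-dualityforzp}, and observe that $T_pF = 0$: if $p^N$ kills $F$ then for $n \geq N$ we have $F[p^n] = F$, and the transition maps in $T_pF = \lim_n F[p^n]$ are eventually multiplication by $p$ on $F$, but no nonzero element of a bounded $p$-power torsion group can be divisible by $p$ arbitrarily many times. Hence $T_pA \cong \mathbf{Z}_p^{\oplus e}$, which identifies $e$ as the $\mathbf{Z}_p$-rank of $T_pA$. For completions, $\widehat{\mathbf{Q}_p/\mathbf{Z}_p} = 0$ because $\mathbf{Q}_p/\mathbf{Z}_p$ is $p$-divisible, while $\widehat{F} = F$ because $F$ is of bounded $p$-power exponent; this gives $\widehat{A} \cong F$.

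For the cokernel statement, I would decompose the natural map $T_pA \to A[p]$ as the direct sum of two pieces: the map $T_p(\mathbf{Q}_p/\mathbf{Z}_p) = \mathbf{Z}_p \to (\mathbf{Q}_p/\mathbf{Z}_p)[p] = \mathbf{Z}/p$, which under the standard identifications is reduction mod $p$ and hence surjective, and the zero map $T_pF = 0 \to F[p]$. Since the cokernel on the first summand vanishes, taking cokernels summand-wise yields the desired isomorphism $\operatorname{Coker}(T_pA \to A[p]) \cong F[p]$. I do not foresee any real obstacle: once the direct sum decomposition from Lemma \ref{lemma-characterization} is available, every assertion becomes a direct computation on the two elementary building blocks, the only mildly subtle point being the explicit identification of the map $\mathbf{Z}_p \to \mathbf{Z}/p$ coming from evaluation at the first coordinate of a Tate module element.
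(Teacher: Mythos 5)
Your proof is correct and follows essentially the same route as the paper: both arguments use the decomposition $A \cong (\mathbf{Q}_p/\mathbf{Z}_p)^{\oplus e} \oplus F$ and compute $T_p$, $\widehat{(-)}$, and the cokernel of $T_pA \to A[p]$ summand by summand (the paper merely delegates the first part to the computations already in the proof of Lemma \ref{lemma-characterization}, which you instead spell out directly).
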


\begin{proof}
    The first part follows directly from the proof of Lemma \ref{lemma-characterization}. For the second part, write $A = A_1 \oplus A_2$ where $A_1 = (\mathbf{Q}_p/\mathbf{Z}_p)^{\oplus e}$ and $A_2 = F$. Then $T_pA_1 \to A_1[p]$ is surjective, $T_pA_2 = 0$, from which the next part follows.
\end{proof}

We end with the application we have in mind.

\begin{thm}
\label{prop-cofinite}
    Let $S$ be a scheme and $i \geq 0$ an integer. Assume $H^i(S, \mu_p)$ and $H^{i+1}(S, \mu_p)$ are finite groups. Then there are an integer $e \geq 0$ and a finite abelian $p$-group $F$ such that
    $$
    H^i(S, \mathbf{G}_m)[p^{\infty}] \cong (\mathbf{Q}_p/\mathbf{Z}_p)^{\oplus e} \oplus F.
    $$
\end{thm}

\begin{proof}
    We use criterion (2) of Lemma \ref{lemma-characterization}. 
    By the Kummer exact sequences
    $$
    0 \to H^{j-1}(S, \mathbf{G}_m)/p \to H^j(S, \mu_{p}) \to H^{j}(S, \mathbf{G}_m)[p] \to 0
    $$
    we see that both $H^{i}(S, \mathbf{G}_m)/p$ and $H^{i}(S, \mathbf{G}_m)[p]$  are finite. We conclude that
    $(H^{i}(S, \mathbf{G}_m)[p^\infty])\otimes _{\mathbf{Z}}\mathbf{Z}/p$ is finite by the lemma below and the result follows from Lemma \ref{lemma-characterization}.
\end{proof}

\begin{lemma}
    Let $A$ be an abelian group. Set $B = A[p^\infty]$. If $A/pA$ is finite, then so is $B/pB$.
\end{lemma}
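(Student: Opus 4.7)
The plan is to show that the natural map $B/pB \to A/pA$ induced by the inclusion $B \subseteq A$ is injective. Since $A/pA$ is assumed finite, this immediately gives finiteness of $B/pB$.

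The key step is to verify the equality $B \cap pA = pB$. The inclusion $pB \subseteq B \cap pA$ is obvious. For the reverse inclusion, I would take $b \in B$ and suppose $b = pa$ for some $a \in A$. Since $b \in B = A[p^\infty]$, there is some $n \geq 0$ with $p^n b = 0$, and then $p^{n+1} a = p^n \cdot pa = p^n b = 0$, so $a \in A[p^{n+1}] \subseteq B$. Hence $b = pa \in pB$, as desired.

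Granted this equality, the kernel of the natural map $B/pB \to A/pA$ is $(B \cap pA)/pB = 0$, so the map is injective. Combined with the hypothesis that $A/pA$ is finite, this concludes the proof.

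There is no serious obstacle here: the only mildly subtle point is the observation that $p$-power torsion in $A$ is ``$p$-divisibility-closed'' in the sense that a preimage under multiplication by $p$ of a $p$-power torsion element is again $p$-power torsion. This is what makes $pB = B \cap pA$ and lets the quotient $B/pB$ sit inside the quotient $A/pA$.
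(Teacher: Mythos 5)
Your proof is correct and is essentially the paper's argument: the paper applies the snake lemma to multiplication by $p$ on $0 \to B \to A \to A/B \to 0$ and uses that $A/B$ is $p$-torsion free, which is exactly your observation that a $p$-th preimage of a $p$-power torsion element is again $p$-power torsion, i.e.\ $B \cap pA = pB$. Both routes yield the injection $B/pB \hookrightarrow A/pA$, so there is nothing to add.
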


\begin{proof}
    Apply the snake lemma to the endomorphism ``multiplication by $p$'' of the short exact sequence
    $$
    0 \to B \to A \to A/B \to 0.
    $$
    Since $A/B$ is $p$-torsion free we see that $B/pB \hookrightarrow A/pA$ and the result follows. 
\end{proof}

\section{Flat cohomology of an ordinary variety}
\label{section-flatcohomologyordinary}

In this section, let $X$ be a smooth proper scheme over an algebraically closed field of positive characteristic $p$. Recall that $X$ is \emph{ordinary} (see \cite{MR699058} where the authors attribute this notion to Kato) if $H^i(X, B^j) = 0$ for all $i \geq 0$ and $j > 0$ where $B^j = B^j_X = \operatorname{Im}(\Omega^{j-1}_X \xrightarrow{d^{j-1}} \Omega^j_X)$ is the image of the de Rham differential. An easy consequence of the definition (which we will not need) is that the Hodge--de Rham spectral sequence for such an $X$ degenerates at $E_1$.

Similar arguments to the proof of the following lemma can be found in \cite{MR916481}.

\begin{lemma}
\label{lemma-computation}
    Assume $X$ is ordinary. For $i \geq 0$, we have $H^{i+1}(X, \mu_p) \cong (\mathbf{Z}/p)^n$ where $n = \operatorname{dim}_k(H^{i}(X, \Omega^1_X))$.
\end{lemma}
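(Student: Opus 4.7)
My plan is to identify $H^{i+1}(X, \mu_p)$ with the étale cohomology of the logarithmic differentials $\Omega^1_{X,\log}$, and then to compute the latter using the ordinary hypothesis. Write $\Omega^1_{X,\log} \subset \Omega^1_X$ for the étale subsheaf generated by sections of the form $d\log u$ for $u \in \mathcal{O}_X^\times$. The first step is the identification
$$H^{i+1}(X_{fl}, \mu_p) \cong H^i(X_{\acute{e}t}, \Omega^1_{X,\log}),$$
which does not require the ordinary hypothesis. On $X_{\acute{e}t}$ there is an exact sequence $0 \to \mathbf{G}_m \xrightarrow{[p]} \mathbf{G}_m \xrightarrow{d\log} \Omega^1_{X,\log} \to 0$: injectivity of $[p]$ holds since $\mu_p$ has no non-trivial étale sections on a reduced $k$-scheme, exactness in the middle follows from the standard identity $\ker(d : \mathcal{O}_X \to \Omega^1_X) = \mathcal{O}_X^p$ for smooth $X$ over a perfect field (so that $\ker(d\log) = (\mathcal{O}_X^\times)^p$), and surjectivity is the definition of $\Omega^1_{X,\log}$. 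Combining the long exact sequence of this étale sequence with the flat Kummer long exact sequence for $\mu_p$, or equivalently computing $R\epsilon_* \mu_p \simeq \Omega^1_{X,\log}[-1]$ for $\epsilon : X_{fl} \to X_{\acute{e}t}$, yields the identification.

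The second step is to use the Artin--Schreier-type short exact sequence of étale sheaves
$$0 \to \Omega^1_{X,\log} \to Z^1_X \xrightarrow{1-C} \Omega^1_X \to 0,$$
where $Z^1_X = \ker(d : \Omega^1_X \to \Omega^2_X)$ and $C : Z^1 \to \Omega^1$ is the Cartier operator. Combining the two short exact sequences
$$0 \to B^1 \to Z^1 \xrightarrow{C} \Omega^1 \to 0, \qquad 0 \to Z^1 \hookrightarrow \Omega^1 \xrightarrow{d} B^2 \to 0$$
with the vanishing $H^j(X, B^\ell) = 0$ for all $j \geq 0$ and $\ell \geq 1$, both the inclusion $\iota : Z^1 \to \Omega^1$ and the Cartier operator induce isomorphisms $H^j(X, Z^1) \xrightarrow{\sim} H^j(X, \Omega^1)$ for every $j \geq 0$. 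Identifying $V := H^i(X, Z^1)$ with $H^i(X, \Omega^1_X)$ via $\iota$, the composition $\tilde{C} := C \circ \iota^{-1}$ is a bijective, $\sigma^{-1}$-semilinear endomorphism of the finite-dimensional $k$-vector space $V$, where $\sigma$ denotes the Frobenius of $k$.

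The final step is a Lang-type linear-algebra input: for any bijective $\sigma^{\pm 1}$-semilinear endomorphism $\phi$ of a finite-dimensional vector space $V$ over an algebraically closed field $k$ of characteristic $p$, the map $1 - \phi$ is surjective and $\ker(1-\phi)$ is an $\mathbf{F}_p$-vector space of dimension $\dim_k V$. Applied to $\phi = \tilde{C}$ in the long exact sequence of the Artin--Schreier sequence, the connecting maps vanish inductively and one gets
$$H^i(X, \Omega^1_{X,\log}) \cong \ker(1 - \tilde{C}) \cong (\mathbf{Z}/p)^n, \qquad n = \dim_k H^i(X, \Omega^1_X),$$
which combined with the identification of the first step yields the stated conclusion. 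I expect the main obstacle to be setting up the first step cleanly---the comparison between flat $\mu_p$-cohomology and the étale cohomology of $\Omega^1_{X, \log}$---either by carefully juxtaposing the two long exact sequences or by verifying that $R^j \epsilon_* \mu_p = 0$ for $j \geq 2$; the remaining steps are fairly standard once the ordinariness is exploited to make Cartier a bijective semilinear operator.
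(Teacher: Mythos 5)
Your proposal is correct and follows essentially the same route as the paper: your sheaf $\Omega^1_{X,\log}$ is Milne's $\nu_p = \mathbf{G}_m/\mathbf{G}_m^p$, your ``Artin--Schreier-type'' sequence $0 \to \Omega^1_{X,\log} \to Z^1 \xrightarrow{1-C} \Omega^1 \to 0$ is exactly Milne's exact sequence used in the paper, and the remaining steps (ordinarity forcing both the inclusion $Z^1 \hookrightarrow \Omega^1$ and the Cartier operator to be isomorphisms on cohomology, followed by the semilinear Lang-type lemma) coincide with the paper's argument. The only cosmetic difference is that you derive the identification $H^{i+1}(X,\mu_p) \cong H^i(X_{\acute{e}t},\nu_p)$ from the $d\log$ presentation and $R\epsilon_*\mu_p$ rather than citing Milne directly.
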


Note that by the Kummer sequence, $H^0(X, \mu_p) = 0$.

\begin{proof}
Writing $Z^1 = Z^1_X = \operatorname{Ker}(\Omega^1_X \xrightarrow{d} \Omega^2_X)$ we have an exact sequence from the Cartier isomorphism
$$
0 \to B^1 \to Z^1 \xrightarrow{C} \Omega^1_X \to 0
$$
and so $C$ induces bijections $C: H^i(X, Z^1) \xrightarrow{\cong} H^i(X, \Omega^1_X)$ for all $i \geq 0$ by the assumption. We also have an exact sequence
$$
0 \to Z^1 \to \Omega^1 \xrightarrow{d} B^2 \to 0
$$
and it follows that the inclusion $j : Z^1 \to \Omega^1_X$ induces isomorphisms $j: H^i(X, Z^1) \xrightarrow{\cong} H^i(X, \Omega^1_X)$ for all $i \geq 0$. \
Recall Milne's exact (for the \'etale topology) sequence
$$
0 \to \nu_p \to Z^1 \xrightarrow{j - C} \Omega^1 \to 0
$$
of sheaves on $X_{\acute{e}t}$ \cite[Lemma 1.3]{MR460331}, where $\nu_p = \mathbf{G}_m/(\mathbf{G}_m^p)$, and  so
$$
H^i(X_{\acute{e}t}, \nu_p) = H^{i+1}(X, \mu_p),
$$
and the composition $\mathbf{G}_m \to \nu_p \to Z^1$ is the $\operatorname{dlog}$ map.

Since $j : H^i(X, Z^1) \to H^i(X, \Omega^1_X)$ is an isomorphism for $i \geq 0$, we can identify the map $H^i(X, Z^1) \xrightarrow{j-C}H^i(X, \Omega^1_X)$ with the map $H^i(X, \Omega^1_X) \xrightarrow{1 - Cj^{-1}}H^i(X, \Omega^1_X)$ and then we conclude by the following well-known lemma.
\end{proof}

\begin{lemma}
   Let $V$ be a finite-dimensional vector space over $k$. Let $T : V \to V$ be an additive map such that $T(av) = a^{1/p}T(v)$ for $a \in k, v \in V$. 
   Then:
   \begin{enumerate}
       \item $1 - T$ is surjective.
       \item If $T : V \to V$ is bijective, then the $\mathbf{F}_p$-vector space $\operatorname{Ker}(1 - T : V \to V)$ has dimension $= \operatorname{dim}_kV$ and moreover
       $$
       \operatorname{Ker}(1 - T : V \to V) \otimes_{\mathbf{F}_p} k = V
       $$
   \end{enumerate}
\end{lemma}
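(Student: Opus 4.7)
My plan is to reduce both parts to a concrete statement about a finite morphism of affine spaces. I would first fix a $k$-basis $e_1,\ldots,e_n$ of $V$ and write $T(e_i)=\sum_j a_{ij}e_j$. By $p^{-1}$-linearity, for $v=\sum x_ie_i$ one has $T(v)=\sum_j\bigl(\sum_i a_{ij}x_i^{1/p}\bigr)e_j$, so the equation $(1-T)(v)=\sum_j w_je_j$ becomes the system $x_j-\sum_i a_{ij}x_i^{1/p}=w_j$. Since $k$ is perfect, the substitution $u_i=x_i^{1/p}$ is bijective and transforms this into
$$\psi_j(u) := u_j^p - \sum_i a_{ij} u_i = w_j, \qquad j=1,\ldots,n,$$
defining a morphism $\psi:\mathbf{A}^n_k\to\mathbf{A}^n_k$.

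For part (1), I would observe that the leading forms of the $\psi_j$ are the monomials $u_j^p$, which form a regular sequence in $k[u_1,\ldots,u_n]$ (the subring $k[u_1^p,\ldots,u_n^p]$ is polynomial, and $k[u]$ is free of rank $p^n$ over it). It follows that $k[u_1,\ldots,u_n]$ is finite free of rank $p^n$ over $k[w_1,\ldots,w_n]$ via $w_j\mapsto \psi_j$, so $\psi$ is finite and dominant, hence surjective on $k$-points; unwinding the substitution, $1-T$ is surjective.

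For part (2), I would assume $T$ is bijective, so $A=(a_{ij})$ is invertible, and note that the Jacobian matrix $(\partial \psi_j/\partial u_i)$ equals (up to sign and transpose) $A$, since $\partial u^p/\partial u = 0$ in characteristic $p$ kills the contribution from $u_j^p$. Hence $\psi$ is \'etale, so $\psi^{-1}(0)$ consists of exactly $p^n$ distinct $k$-points; via $x_i=u_i^p$ these correspond bijectively to $K:=\operatorname{Ker}(1-T)$, giving $|K|=p^n$. To finish, I would invoke the standard ``shortest relation'' argument to show that $\mathbf{F}_p$-independent fixed vectors remain $k$-independent: given a minimal $k$-linear dependence $\sum c_iv_i=0$ among elements of $K$, applying $T$ yields $\sum c_i^{1/p}v_i=0$, and subtraction produces a shorter relation unless every $c_i\in\mathbf{F}_p$. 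This gives an injection $K\otimes_{\mathbf{F}_p}k\hookrightarrow V$, so $\dim_{\mathbf{F}_p}K\leq n$; combined with $|K|=p^n$, this forces equality and makes the injection an isomorphism.

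The only real obstacle will be the \'etaleness in part (2), which rests entirely on the characteristic-$p$ identity $d(u^p)=0$; once that is noted, the rest is the familiar commutative-algebra fact that polynomials whose top-degree forms are a regular sequence define a finite flat morphism.
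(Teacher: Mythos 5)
Your proof is correct, and in fact it supplies an argument where the paper gives none: the lemma is stated there as ``well-known'' with no proof. Your reduction to the morphism $\psi:\mathbf{A}^n\to\mathbf{A}^n$, $\psi_j(u)=u_j^p-\sum_i a_{ij}u_i$, is sound: the leading forms $u_j^p$ have no common projective zero, so $k[u]$ is finite free of rank $p^n$ over $k[\psi_1,\dots,\psi_n]$, giving surjectivity of $1-T$ for arbitrary $T$ in (1); and in (2) the Jacobian of $\psi$ is the constant invertible matrix $-A^{\mathsf t}$ (the term $u_j^p$ contributing nothing in characteristic $p$), so $\psi$ is finite \'etale of degree $p^n$ and the fiber over $0$ has exactly $p^n$ points, i.e.\ $\lvert\operatorname{Ker}(1-T)\rvert=p^n$. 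Combined with the shortest-relation argument (which does need the normalization $c_1=1$ before subtracting, as you implicitly use, so that the support strictly shrinks), this pins down $\operatorname{Ker}(1-T)$ as an $\mathbf{F}_p$-form of $V$. For comparison, the more traditional route to (2) is to observe that when $T$ is bijective its inverse is $p$-linear ($T^{-1}(av)=a^pT^{-1}(v)$) with the same fixed points, and then quote the classical theorem that a bijective $p$-linear endomorphism of a finite-dimensional vector space over a separably closed field admits a basis of fixed vectors; your geometric argument proves that classical fact along the way, at the cost of the commutative-algebra input about leading forms, and has the advantage of handling (1) uniformly without any bijectivity or Fitting-type decomposition of $T$.
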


\begin{remark}
\label{remark-extensionofscalars}
    The proof shows that in fact if $\varphi: \nu_p \to \Omega^1_X$ is the composition of $\nu_p \to Z^1 \xrightarrow{j} \Omega^1_X$, then 
    $$
    \varphi: H^{i+1}(X, \mu_p) = H^i(X, \nu_p) \to H^i(X, \Omega^1_X)
    $$
    is injective and its image $I$ satisfies
    $$
    I \otimes_{\mathbf{F}_p} k = H^i(X, \Omega^1_X). 
    $$
\end{remark}

Illusie defines in \cite[II, n\textsuperscript{o} 5]{MR565469} 
$$
H^i(X, \mathbf{Z}_p(1)) = \operatorname{lim}_n H^i(X, \mu_{p^n}),
$$
and argues that for all $i \geq 0$ the system above satisfies ML and $H^i(X, \mathbf{Z}_p(1))$ is $p$-adically complete (and separated in Bourbaki terminology). 

There is a universal coefficients theorem for these groups. For each $n>0$, there is a short exact sequence of group schemes
\begin{equation}
\label{equn-induction}
0 \to \mu_{p^{n-1}} \to \mu_{p^n} \to \mu_p \to 0
\end{equation}
which gives for every $n$ a long exact sequence of cohomology. Viewing these as systems over $n$ and taking a limit, and using the fact that the systems satisfy ML, we get a long exact sequence of cohomology
$$
\cdots \to H^i(X, \mathbf{Z}_p(1)) \to H^i(X, \mathbf{Z}_p(1)) \to H^i(X, \mu_p) \to H^{i+1}(X, \mathbf{Z}_p(1)) \to \cdots.
$$
In fact, the morphisms $H^i(X, \mathbf{Z}_p(1)) \to H^i(X, \mathbf{Z}_p(1))$ are multiplication by $p$: In general if $A$ is a sheaf of abelian groups on a site, then multiplication by $p$ on $T_pA$ is injective with image $\operatorname{lim}_n A[p^{n-1}]$. We have proven:

\begin{lemma}
\label{lemma-univcoeff}
    For all $i \geq 0$ there are short exact sequences
    $$
  0 \to H^i(X, \mathbf{Z}_p(1))/p  \to H^i(X, \mu_p) \to H^{i+1}(X, \mathbf{Z}_p(1))[p] \to 0. 
    $$
    \qed
\end{lemma}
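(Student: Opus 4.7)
The plan is to obtain the short exact sequences by breaking up the long exact cohomology sequence associated to the family of short exact sequences (\ref{equn-induction}), after passing to the inverse limit in $n$.

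First I would write down, for each $n \geq 1$, the long exact sequence in fppf cohomology attached to (\ref{equn-induction}):
\[
\cdots \to H^i(X, \mu_{p^{n-1}}) \xrightarrow{\iota_n} H^i(X, \mu_{p^n}) \xrightarrow{\rho_n} H^i(X, \mu_p) \xrightarrow{\delta_n} H^{i+1}(X, \mu_{p^{n-1}}) \to \cdots,
\]
where $\iota_n$ is induced by the inclusion $\mu_{p^{n-1}} \hookrightarrow \mu_{p^n}$ and $\rho_n$ by $x \mapsto x^{p^{n-1}}$. These sequences assemble into a short exact sequence of inverse systems of long exact sequences, compatible with the transition maps $\mu_{p^n} \to \mu_{p^{n-1}}$ given by $x \mapsto x^p$.

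Next I would pass to the inverse limit in $n$. Since Illusie shows that each of the inverse systems $(H^j(X, \mu_{p^n}))_n$ satisfies Mittag--Leffler (as recalled just above the statement), taking $\lim_n$ preserves exactness and yields
\[
\cdots \to H^i(X, \mathbf{Z}_p(1)) \xrightarrow{\tau} H^i(X, \mathbf{Z}_p(1)) \to H^i(X, \mu_p) \to H^{i+1}(X, \mathbf{Z}_p(1)) \xrightarrow{\tau} \cdots.
\]
The content of the proof is then to identify $\tau$ with multiplication by $p$. For this I would use the general fact, recalled in the paragraph preceding the lemma, that for any abelian sheaf $A$ on a site, multiplication by $p$ on $T_p A = \lim_n A[p^n]$ is injective with image exactly $\lim_n A[p^{n-1}]$; applied here, the inclusion maps $\mu_{p^{n-1}} \hookrightarrow \mu_{p^n}$ induce on the limit precisely multiplication by $p$ on $H^i(X, \mathbf{Z}_p(1))$.

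Finally, splitting the resulting long exact sequence at each position gives the short exact sequences
\[
0 \to H^i(X, \mathbf{Z}_p(1))/p \to H^i(X, \mu_p) \to H^{i+1}(X, \mathbf{Z}_p(1))[p] \to 0
\]
as claimed. The only real point requiring care is the identification of $\tau$ with multiplication by $p$: one must check that the induced map on the limit, coming from the inclusions at finite level, matches the algebraic description. Everything else is formal manipulation of a long exact sequence and an application of Mittag--Leffler, both of which are directly justified by the setup in the preceding discussion.
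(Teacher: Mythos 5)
Your proposal is correct and follows essentially the same route as the paper: take the long exact sequences attached to the family (\ref{equn-induction}), pass to the inverse limit using Mittag--Leffler, identify the induced endomorphism of $H^i(X,\mathbf{Z}_p(1))$ with multiplication by $p$, and split the resulting long exact sequence. The one point you flag as needing care --- that the maps induced by the inclusions $\mu_{p^{n-1}}\hookrightarrow\mu_{p^n}$ become multiplication by $p$ on the limit --- is exactly the point the paper singles out as well.
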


Illusie also shows the following. 

\begin{lemma}
\label{lemma-kummerpadics}
The Kummer sequences induce exact sequences 
$$
0 \to \widehat{H^i(X, \mathbf{G}_m)} \to H^i(X, \mathbf{Z}_p(1)) \to T_pH^{i+1}(X, \mathbf{G}_m) \to 0.
$$
\end{lemma}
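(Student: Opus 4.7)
The plan is to take the inverse limit of the short exact sequences arising from the individual Kummer sequences $0 \to \mu_{p^n} \to \mathbf{G}_m \xrightarrow{p^n} \mathbf{G}_m \to 0$ on $X_{fppf}$, and identify each of the resulting terms. For each $n$, the long exact sequence of cohomology produces a short exact sequence of the shape
$$
0 \to H^i(X, \mathbf{G}_m)/p^n \to H^{i+1}(X, \mu_{p^n}) \to H^{i+1}(X, \mathbf{G}_m)[p^n] \to 0.
$$

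Next I would organize these into an inverse system. The compatible map of Kummer sequences from level $n$ to level $n-1$ is given by $(\cdot)^p$ on $\mu_{p^n}$ and on the middle copy of $\mathbf{G}_m$, together with the identity on the rightmost copy of $\mathbf{G}_m$; commutativity of the resulting diagram reduces to $(x^p)^{p^{n-1}}=x^{p^n}$. By naturality of the connecting homomorphism, the induced transition on the left piece $H^i(X,\mathbf{G}_m)/p^n \to H^i(X,\mathbf{G}_m)/p^{n-1}$ is the standard quotient map (since it is induced by the identity on the third $\mathbf{G}_m$), while the induced transition on the right piece $H^{i+1}(X,\mathbf{G}_m)[p^n] \to H^{i+1}(X,\mathbf{G}_m)[p^{n-1}]$ is multiplication by $p$ (since it is induced by the $p$-th power map on the middle $\mathbf{G}_m$).

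Now I would pass to the limit. Since the left-hand system has surjective transitions, it satisfies Mittag--Leffler and its $\lim^1$ vanishes. The six-term $\lim/\lim^1$ exact sequence therefore yields a short exact sequence
$$
0 \to \lim_n H^i(X,\mathbf{G}_m)/p^n \to \lim_n H^{i+1}(X, \mu_{p^n}) \to \lim_n H^{i+1}(X,\mathbf{G}_m)[p^n] \to 0.
$$
The left term is $\widehat{H^i(X,\mathbf{G}_m)}$ by definition of $p$-adic completion; the middle term is Illusie's $H^{i+1}(X, \mathbf{Z}_p(1))$; and because the transitions on the right are multiplication by $p$, the right term is precisely the Tate module $T_p H^{i+1}(X,\mathbf{G}_m)$. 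This recovers the asserted exact sequence.

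The only non-formal point is the computation of the transition maps between the short exact sequences. Once that is pinned down, everything else follows from the vanishing of $\lim^1$ for a system with surjective transitions, plus the definitions of $\widehat{(-)}$, $H^{i+1}(X,\mathbf{Z}_p(1))$, and $T_p(-)$; I expect this bookkeeping with the naturality of the connecting homomorphism to be the main obstacle, but it is entirely routine.
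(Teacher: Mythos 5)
Your argument is correct and is precisely the standard one (the paper gives no proof of this lemma, citing Illusie): pass to the limit over $n$ of the Kummer short exact sequences with transition maps induced by the $p$-th power maps, identify the induced transitions on the sub and quotient as the natural projections and multiplication by $p$ respectively, and use vanishing of $\lim^1$ for the left-hand system, which has surjective transitions. The only thing to flag is indexing: your proof yields $0 \to \widehat{H^i(X,\mathbf{G}_m)} \to H^{i+1}(X,\mathbf{Z}_p(1)) \to T_pH^{i+1}(X,\mathbf{G}_m) \to 0$, which is the correct form (consistent with the diagram in the proof of Lemma \ref{lemma-recoveref}), whereas the lemma as printed has $H^i(X,\mathbf{Z}_p(1))$ in the middle --- an off-by-one typo in the statement, not an error in your argument.
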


Illusie defines $H^i(X, \mathbf{Q}_p(1)) := H^i(X, \mathbf{Z}_p(1))[1/p]$, and proves a fundamental relation between this group and crystalline cohomology.

\begin{thm}[{\cite[II, Th\'eor\`eme 5.5]{MR565469}}]
    For $i \geq 0$,
$$
H^i(X, \mathbf{Q}_p(1))= \operatorname{Ker}(F-p : H^{i}_{crys}(X/W)_K \to H^{i}_{crys}(X/W)_K)
$$
where $W = W(k)$ and $K = \operatorname{Frac}(W)$. In particular, its dimension over $\mathbf{Q}_p$ is the dimension over $K$ of the part of $H^i(X/W)_K$ with slope exactly one.
\end{thm}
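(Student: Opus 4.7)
The plan is to pass from flat cohomology with $\mu_{p^n}$ coefficients to the logarithmic de Rham--Witt sheaves via $\operatorname{dlog}$, and then use the slope spectral sequence to compare with crystalline cohomology. The starting point is the classical identification, already visible in the mod-$p$ computation of Lemma \ref{lemma-computation}, between $H^{i}(X, \mu_{p^n})$ and the étale cohomology of the Milne-style sheaf $\nu_{p^n}(1) = W_n\Omega^1_{X,\log}$. The refined version, due to Illusie, gives for each $n \geq 1$ a short exact sequence
$$
0 \to W_n\Omega^1_{X,\log} \to W_n\Omega^1_X \xrightarrow{F-1} W_n\Omega^1_X /dV^{n-1}\mathcal{O}_X \to 0
$$
on $X_{\acute{e}t}$, together with a quasi-isomorphism $R\Gamma(X, \mu_{p^n}) \simeq R\Gamma(X, W_n\Omega^1_{X,\log})[-1]$. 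Passing to the limit in $n$ and using the Mittag--Leffler property mentioned just before Lemma \ref{lemma-univcoeff}, one obtains
$$
H^i(X, \mathbf{Z}_p(1)) \cong H^{i-1}(X, W\Omega^1_{X,\log}).
$$

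Next I would invoke the de Rham--Witt comparison theorem, which provides a quasi-isomorphism $R\Gamma_{crys}(X/W) \simeq R\Gamma(X, W\Omega^{\bullet}_X)$, together with the resulting slope spectral sequence $E_1^{r,s} = H^s(X, W\Omega^r_X) \Rightarrow H^{r+s}_{crys}(X/W)$. After tensoring with $K = \operatorname{Frac}(W)$ this spectral sequence degenerates, and the subquotient $H^s(X, W\Omega^r_X)_K$ is identified with the slope $[r, r+1)$ part of $H^{r+s}_{crys}(X/W)_K$. Under this identification the Frobenius on de Rham--Witt becomes $p^r$ times the crystalline Frobenius on the slope $[r, r+1)$ piece, so the operator $F-1$ on $H^{i-1}(X, W\Omega^1_X)_K$ matches $F-p$ on the slope $[1,2)$ part of $H^i_{crys}(X/W)_K$.

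Combining these steps, inverting $p$ in the previous display gives
$$
H^i(X, \mathbf{Q}_p(1)) \cong H^{i-1}(X, W\Omega^1_{X,\log}) \otimes_{\mathbf{Z}_p} \mathbf{Q}_p = \operatorname{Ker}\bigl(F-1 : H^{i-1}(X, W\Omega^1_X)_K \to H^{i-1}(X, W\Omega^1_X)_K\bigr),
$$
and by the previous paragraph this kernel is exactly the $(F-p)$-fixed part of the slope one piece of $H^i_{crys}(X/W)_K$. Since $F-p$ is bijective on the slope $s$ piece whenever $s \neq 1$ (its eigenvalues have $p$-adic valuation $s$, so only slope $s=1$ contributes to the kernel), one concludes
$$
\operatorname{Ker}\bigl(F-p : H^{i}_{crys}(X/W)_K \to H^{i}_{crys}(X/W)_K\bigr) = \operatorname{Ker}\bigl(F-p : H^i_{crys}(X/W)_K^{[1,2)} \to H^i_{crys}(X/W)_K^{[1,2)}\bigr),
$$
which matches $H^i(X, \mathbf{Q}_p(1))$ and yields the stated dimension count.

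The main obstacle is the first step: establishing the exact sequence relating $\mu_{p^n}$ to the logarithmic de Rham--Witt sheaves at finite level, together with the surjectivity of $F-1$ on $W_n\Omega^1_X/dV^{n-1}\mathcal{O}_X$. Already at $n=1$ this is essentially the content of Milne's exact sequence used in the proof of Lemma \ref{lemma-computation}, but inducting up via the sequence (\ref{equn-induction}) requires the careful analysis of the Frobenius and Verschiebung on the de Rham--Witt complex carried out in Illusie's foundational work; the slope decomposition argument, in contrast, is essentially formal once the comparison with de Rham--Witt is in hand.
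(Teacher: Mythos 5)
This statement is quoted verbatim from Illusie \cite[II, Th\'eor\`eme 5.5]{MR565469} and the paper supplies no proof of its own, so there is nothing internal to compare against; your sketch is a faithful outline of Illusie's actual argument (identification of $H^i(X,\mathbf{Z}_p(1))$ with $H^{i-1}(X, W\Omega^1_{X,\log})$ via the finite-level exact sequences for $F-1$, then the slope spectral sequence after inverting $p$). The one slip is directional: on $H^s(X, W\Omega^r_X)_K$ it is the \emph{crystalline} Frobenius that equals $p^r$ times the de Rham--Witt $F$, not the reverse as you wrote --- note that your subsequent identification of $\operatorname{Ker}(F-1)$ with $\operatorname{Ker}(F-p)$ on the slope $[1,2)$ part silently uses the correct direction.
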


\begin{lemma}
\label{lemma-finitetypeifordinary}
    Assume $X$ is ordinary. Then $H^i(X, \mathbf{Z}_p(1))$ is a finite $\mathbf{Z}_p$-module for all $i$.
\end{lemma}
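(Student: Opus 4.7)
The plan is to combine three ingredients already in hand: the vanishing criterion for ordinariness via Lemma \ref{lemma-computation}, the universal coefficients sequence of Lemma \ref{lemma-univcoeff}, and the fact (due to Illusie and recalled in the paragraph preceding Lemma \ref{lemma-univcoeff}) that each $H^i(X, \mathbf{Z}_p(1))$ is $p$-adically complete and separated.

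First I would observe that for every $j \geq 0$, the group $H^j(X, \mu_p)$ is finite. Indeed, for $j = 0$ this is zero by the Kummer sequence and the assumption that $k$ is algebraically closed; for $j \geq 1$ Lemma \ref{lemma-computation} identifies $H^j(X, \mu_p)$ with $(\mathbf{Z}/p)^{n_{j-1}}$ where $n_{j-1} = \dim_k H^{j-1}(X, \Omega^1_X)$, which is finite since $X$ is smooth and proper over $k$. Next, I would plug this into Lemma \ref{lemma-univcoeff}: the short exact sequence
$$
0 \to H^i(X, \mathbf{Z}_p(1))/p \to H^i(X, \mu_p) \to H^{i+1}(X, \mathbf{Z}_p(1))[p] \to 0
$$
realises $H^i(X, \mathbf{Z}_p(1))/p$ as a subgroup of the finite group $H^i(X, \mu_p)$, hence finite.

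At this point the standard topological Nakayama criterion finishes the job: since $H^i(X, \mathbf{Z}_p(1))$ is $p$-adically complete and separated and its reduction mod $p$ is finite, one can lift generators of $H^i(X, \mathbf{Z}_p(1))/p$ to a finite set of $\mathbf{Z}_p$-module generators of $H^i(X, \mathbf{Z}_p(1))$ (this is \cite[\href{https://stacks.math.columbia.edu/tag/031D}{Tag 031D}]{stacks-project}). Hence $H^i(X, \mathbf{Z}_p(1))$ is a finitely generated, and therefore finite (as a $p$-adically complete, finitely generated) $\mathbf{Z}_p$-module.

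No serious obstacle is anticipated; the only thing to watch is that the completeness and separatedness of $H^i(X, \mathbf{Z}_p(1))$ is being invoked as input from Illusie's work (recalled just above Lemma \ref{lemma-univcoeff}), and that the finiteness of $H^j(X, \mu_p)$ needs to be asserted for \emph{all} $j \geq 0$, not just the single index of interest, because both ends of the Lemma \ref{lemma-univcoeff} sequence involve $\mu_p$-cohomology (in the index $i$ here only $H^i(X, \mu_p)$ is used, so actually one only needs finiteness in a single degree, making the argument even cleaner).
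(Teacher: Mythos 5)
Your proposal is correct and follows essentially the same route as the paper: reduce to finiteness of $H^i(X, \mathbf{Z}_p(1))/p$ via the universal coefficients sequence of Lemma \ref{lemma-univcoeff} and the computation of $H^j(X,\mu_p)$ in Lemma \ref{lemma-computation}, then conclude by $p$-adic completeness and separatedness using the criterion of \cite[\href{https://stacks.math.columbia.edu/tag/031D}{Tag 031D}]{stacks-project}. Your closing remark that only the single degree $i$ of $\mu_p$-cohomology is needed is also accurate.
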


Illusie proved in \emph{loc. cit.} that $H^i(X, \mathbf{Z}_p(1))$ is a finite $\mathbf{Z}_p$-module for $i = 1, 2$ for any smooth proper variety over $k$. It is easily seen to be zero when $i = 0$ and torsion-free when $i = 1$. He also showed that $H^3(X, \mathbf{Z}_p(1))\cong k$ when $X$ is a supersingular $K3$ surface. 

\begin{proof}
    It is $p$-adically complete (and separated) so it suffices to show $H^i(X, \mathbf{Z}_p(1))/p$ has finite dimension as an $\mathbf{F}_p$-vector space, which follows from Lemmas \ref{lemma-computation} and \ref{lemma-univcoeff}.
\end{proof}

The following is a slight generalization of \cite[Theorem 1.1]{MR4936530}. 

\begin{thm}
\label{lemma-cofinite}
Assume $X$ is ordinary. Then for all $i \geq 1$ there are an integer $e \geq 0$ and a finite Abelian $p$-group $F$ such that 
    $$
    H^i(X, \mathbf{G}_m)[p^{\infty}] \cong (\mathbf{Q}_p/\mathbf{Z}_p)^{\oplus e} \oplus F.
    $$
\end{thm}

Note that $H^0(X, \mathbf{G}_m)[p^\infty] = (k^\times )[p^\infty] = 0$.

\begin{proof}
    This follows from Theorem \ref{prop-cofinite} and Lemma \ref{lemma-computation}.
\end{proof}

\begin{lemma}
\label{lemma-recoveref}
    In the setting of Theorem \ref{lemma-cofinite}, we can recover $e$ via
    $$
    T_pH^{i}(X, \mathbf{G}_m) \cong \mathbf{Z}_p^{\oplus e}
    $$
    and the $p$-torsion of $F$ via
    $$
    F[p] \cong H^{i+1}(X, \mathbf{Z}_p(1))[p].
    $$
\end{lemma}

\begin{proof}
    We have
    $$
    T_pH^{i}(X, \mathbf{G}_m) = T_p(H^{i}(X, \mathbf{G}_m)[p^{\infty}]) = T_p((\mathbf{Q}_p/\mathbf{Z}_p)^{\oplus e} \oplus F) \cong  \mathbf{Z}_p^{\oplus e}.
    $$
    Next, there is a commutative diagram with exact rows
\[\begin{tikzcd}
	0 & {\widehat{H^{i-1}(X, \mathbf{G}_m))}} & {H^{i}(X, \mathbf{Z}_p(1))} & {T_pH^i(X, \mathbf{G}_m)} & 0 \\
	0 & {H^{i-1}(X, \mathbf{G}_m)/p} & {H^{i}(X, \mu_p)} & {H^{i}(X, \mathbf{G}_m)[p]} & 0
	\arrow[from=1-1, to=1-2]
	\arrow[from=1-2, to=1-3]
	\arrow[from=1-2, to=2-2]
	\arrow[from=1-3, to=1-4]
	\arrow[ from=1-3, to=2-3]
	\arrow[from=1-4, to=1-5]
	\arrow[ from=1-4, to=2-4]
	\arrow[from=2-1, to=2-2]
	\arrow[from=2-2, to=2-3]
	\arrow[from=2-3, to=2-4]
	\arrow[from=2-4, to=2-5].
\end{tikzcd}\]
The left vertical arrow is surjective, so the snake lemma identifies the cokerenels of the middle and right vertical arrows. By universal coefficients (Lemma \ref{lemma-univcoeff}), the cokernel of the middle arrow is isomorphic to $H^{i+1}(X, \mathbf{Z}_p(1))[p]$. By Corollary \ref{cor-tocriterion}, the cokernel of the right vertical arrow is isomorphic to $F[p]$.
\end{proof}

\section{An analog of the Igusa surface}
\label{section-theexample}

Fix an algebraically closed field $k$ of characteristic $2$.  
Let $A, B$ be ordinary abelian varieties over $k$ of dimensions $d, e \geq 1$ respectively. Let $Y = A \times B$ and write $N = d + e$ for the dimension of $Y$. Let $x \in B(k)$ be a non-zero $2$-torsion point. Let $G = \mathbf{Z}/2$ act on $Y$ via the involution
$$
g = (-1) \times \tau_x : A \times B \to A \times B.
$$
This is a free action of a finite group on a smooth projective variety, so the quotient $\pi: Y \to X = Y/G$ is a smooth projective variety over $k$ which is ordinary by the following. 

\begin{lemma}
\label{lemma-quotientofordinary}
    Let $Y$ be a smooth proper scheme over an algebraically closed field of positive characteristic $p$. Assume $Y$ is ordinary. Let $G$ be a finite group acting freely on $Y$. Then $Y/G$ is ordinary also.
\end{lemma}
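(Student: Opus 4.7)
Since the $G$-action on $Y$ is free, the quotient map $\pi \colon Y \to X = Y/G$ is a finite étale Galois cover of smooth proper varieties with group $G$. Étaleness gives $\pi^{*}\Omega^{\bullet}_{X} = \Omega^{\bullet}_{Y}$ compatibly with the de Rham differentials (since $d$ commutes with étale pullback), so in particular $\pi^{*}B^{j}_{X} = B^{j}_{Y}$ for every $j \geq 1$. This is the only geometric input needed to relate the ordinariness obstructions on the two sides.

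The plan is to deduce $H^{n}(X, B^{j}_{X}) = 0$ for all $n \geq 0$ and $j \geq 1$ from the corresponding vanishing on $Y$ via the Cartan--Leray (Hochschild--Serre) spectral sequence for the Galois cover $\pi$. For a coherent sheaf $\mathcal{F}$ on $X$ this reads
\[
E_{2}^{p,q} = H^{p}\bigl(G,\, H^{q}(Y, \pi^{*}\mathcal{F})\bigr) \;\Longrightarrow\; H^{p+q}(X, \mathcal{F}),
\]
and I would apply it to $\mathcal{F} = B^{j}_{X}$. By $\pi^{*}B^{j}_{X} = B^{j}_{Y}$ and ordinariness of $Y$, every $H^{q}(Y, B^{j}_{Y})$ vanishes, so the entire $E_{2}$-page is zero. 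The spectral sequence then forces $H^{n}(X, B^{j}_{X}) = 0$ for all $n \geq 0$ and $j \geq 1$, which is exactly the definition of $X$ being ordinary.

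The main point requiring care is the validity of the Cartan--Leray spectral sequence in this setting, i.e.\ for (quasi-)coherent cohomology on a scheme-level quotient by a free finite group action, rather than merely for étale cohomology. This reduces to the identification $R\Gamma(X, \mathcal{F}) \simeq R\Gamma\bigl(G,\, R\Gamma(Y, \pi^{*}\mathcal{F})\bigr)$. Concretely, $\pi_{*}\mathcal{O}_{Y}$ is étale-locally isomorphic to $\mathcal{O}_{X} \otimes_{k} k[G]$ with $G$ acting by the regular representation, and since $R\Gamma(G, k[G]) = k$ one obtains $R\Gamma(G, \pi_{*}\mathcal{O}_{Y}) \simeq \mathcal{O}_{X}$ globally on $X$. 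Tensoring with $\mathcal{F}$ (which carries trivial $G$-action as it is pulled back from $X$) and applying $R\Gamma(X, -)$ gives the desired identity, from which the spectral sequence follows by the standard Grothendieck spectral sequence machinery.
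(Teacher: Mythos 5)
Your proof is correct and follows essentially the same route as the paper: identify $B^j_Y$ with the pullback of $B^j_{Y/G}$ along the \'etale quotient map, then run the Cartan--Leray/descent spectral sequence $H^r(G, H^s(Y, B^j_Y)) \Rightarrow H^{r+s}(Y/G, B^j_{Y/G})$, whose $E_2$-page vanishes by ordinarity of $Y$. The one place the paper is more careful is in making sense of $\pi^*B^j$: since $B^j$ is not an $\mathcal{O}$-submodule of $\Omega^j$ for the naive module structure, one must use the Frobenius-twisted structure (viewing $B^j$ as the image of the $\mathcal{O}$-linear map $F_*\Omega^{j-1} \to F_*\Omega^j$) together with the Cartesian Frobenius square and flat base change to get $\pi^*B^j_{Y/G} = B^j_Y$ --- your parenthetical ``$d$ commutes with \'etale pullback'' is the right idea but elides this point, which also quietly underlies the projection-formula step in your justification of the spectral sequence.
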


Note that in the generality above $Y/G$ is an algebraic space smooth and proper over the base field but there is no essential difficulty extending the notion of ordinarity to this case.

\begin{proof}
Because we work in prime characteristic, the abelian sheaves $B^j$ have natural $\mathcal{O}$-module structures given by $f\cdot  d \omega = f^pd\omega = d(f^p \omega)$. Equivalently, we view $B^j$ as the image of the $\mathcal{O}$-linear map $F_*(d^{j-1}) : F_*\Omega^{j-1} \to F_*\Omega^j$. Write $\pi : Y \to Y/G$ for the quotient map. Then since $\pi$ is \'etale, we have $\pi^*B^j_{Y/G} = B^j_Y$ as $\mathcal{O}_Y$-modules for all $j$: The square
\[\begin{tikzcd}
	Y & Y \\
	{Y/G} & {Y/G}
	\arrow["F", from=1-1, to=1-2]
	\arrow["\pi"', from=1-1, to=2-1]
	\arrow["\pi", from=1-2, to=2-2]
	\arrow["F", from=2-1, to=2-2]
\end{tikzcd}\]
in which $F$ denotes absolute Frobenius is Cartesian since $\pi$ is \'etale. By flat base change and the fact that $\pi^*\Omega^k_{Y/G} = \Omega^k_Y$, we see that $\pi^* F_*\Omega^k_{Y/G} = F_*\Omega^k_Y$. This shows that the natural morphism of complexes of $\mathcal{O}_Y$-modules $\pi^*F_*\Omega^\bullet_{Y/G} \to F_*\Omega_Y^\bullet$ is an isomorphism. Since $\pi^*$ is exact, taking images of each differential proves the claim.

Since $\pi^*B^j_{Y/G} = B^j_Y$, there is a spectral sequence
    $$
    E_2^{r, s} = H^r(G, H^s(Y, B^j_Y)) \implies H^{r+s}(Y/G, B^j_{Y/G}).
    $$
Since $Y$ is ordinary we have $E_2^{r, s} = 0$ for all $r, s$ so the spectral sequence converges to zero and $Y/G$ is ordinary. 
\end{proof}

The following is standard.

\begin{lemma}
    The cotangent sheaf of $X$ is free, $\Omega^1_X \cong \mathcal{O}_X^{\oplus N}$.
\end{lemma}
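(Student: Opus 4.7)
The plan is to use étale descent from $Y = A \times B$, which itself has trivial cotangent bundle. Since the action of $G$ on $Y$ is free, the quotient map $\pi : Y \to X$ is étale, so $\pi^*\Omega^1_X \cong \Omega^1_Y$ canonically, and coherent sheaves on $X$ correspond to $G$-equivariant coherent sheaves on $Y$ with respect to the natural $G$-linearization of $\Omega^1_Y$ induced by this identification. The strategy is to exhibit $N$ global sections of $\Omega^1_X$ whose pullbacks to $Y$ give a basis of $\Omega^1_Y$; by faithful flatness of $\pi$ this will prove freeness.

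Since $A$ and $B$ are abelian varieties, both $\Omega^1_A$ and $\Omega^1_B$ are trivialized by translation-invariant differentials. Writing $p_A, p_B$ for the projections from $Y$, we therefore have
$$\Omega^1_Y \;\cong\; p_A^*\Omega^1_A \oplus p_B^*\Omega^1_B \;\cong\; \mathcal{O}_Y^{\oplus N},$$
with a basis given by pullbacks of translation-invariant differentials $\omega_1, \dots, \omega_d$ on $A$ and $\eta_1, \dots, \eta_e$ on $B$. I would next check that each of these basis sections is $G$-invariant under the natural linearization. For the action $g = (-1) \times \tau_x$, the translation $\tau_x$ acts trivially on any translation-invariant differential on $B$, so $\tau_x^*\eta_j = \eta_j$. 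The automorphism $[-1] : A \to A$ has derivative equal to $-1$ on the tangent space at the origin, so it acts on translation-invariant differentials by multiplication by $-1$; crucially, $k$ has characteristic $2$, so $-1 = 1$ and thus $[-1]^*\omega_i = \omega_i$. Hence all $N$ basis sections descend to global sections $\bar\omega_i, \bar\eta_j$ of $\Omega^1_X$.

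These global sections define a map
$$\mathcal{O}_X^{\oplus N} \longrightarrow \Omega^1_X,$$
whose pullback along the faithfully flat étale morphism $\pi$ is the isomorphism $\mathcal{O}_Y^{\oplus N} \to \Omega^1_Y$ described above. By descent the map itself is an isomorphism, proving that $\Omega^1_X$ is free of rank $N$.

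The only real subtlety is the $G$-invariance of the translation-invariant $1$-forms on $A$, and this is precisely where the characteristic $2$ hypothesis intervenes: in any other characteristic, the sign $-1$ in the action of $[-1]$ on invariant differentials would obstruct descent, and one would not expect the cotangent sheaf to be free (indeed, $Y/G$ with this action is essentially the higher-dimensional Igusa-type analogue of a bielliptic variety, and its cotangent bundle is generally nontrivial outside characteristic $2$). The rest of the argument is a formal application of étale descent and the elementary structure of $\Omega^1$ on a product of abelian varieties.
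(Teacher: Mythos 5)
Your proof is correct and takes essentially the same route as the paper: trivialize $\Omega^1_Y$ by invariant differentials and observe that $G$ acts trivially on them (the translation $\tau_x$ tautologically, and $[-1]$ by $-1 = +1$ since $k$ has characteristic $2$), so the trivialization descends along the \'etale quotient $Y \to X$. The paper states this more tersely---it simply notes that $G$ acts trivially on $H^0(Y, \Omega^1_Y)$, so the canonical isomorphism $H^0(Y, \Omega^1_Y) \otimes_k \mathcal{O}_Y \cong \Omega^1_Y$ is $G$-equivariant and descends---but the content is identical to your more explicit descent argument.
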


\begin{proof}
    Since $Y$ is an abelian variety over $k$, its cotangent sheaf is free
    $$
    H^0(Y, \Omega^1_Y) \otimes _k \mathcal{O}_Y = \Omega^1_Y.
    $$
    The group $G$ acts trivially on $H^0(Y, \Omega^1_Y)$ so in fact the isomorphism above is an isomorphism of $G$-equivariant sheaves, proving the lemma.
\end{proof}

\begin{lemma}
\label{lemma-picardschemecomp}
    There is an exact sequence of group schemes
    $$
0 \to B/\langle x \rangle ^t \to \operatorname{Pic}^\tau_X \to A^t[2] \to 0.
$$
    In particular, $\operatorname{dim}_k H^1(X, \mathcal{O}_X) = N$.
\end{lemma}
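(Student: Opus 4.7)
The plan is to build the two morphisms in the exact sequence directly via pullbacks, then verify exactness by Galois descent along $\pi : Y \to X$ and an application of the five lemma. The second projection $p_B : Y \to B$ is $G$-equivariant when $B$ carries the $\langle x \rangle$-action by translation, so it descends to a morphism $\bar p_B : X \to B/\langle x \rangle$, and pulling back line bundles defines $\bar p_B^* : (B/\langle x \rangle)^t \to \operatorname{Pic}^\tau_X$. On the other hand, since $\operatorname{NS}(Y)$ is torsion-free, $\pi^*$ sends $\operatorname{Pic}^\tau_X$ into $\operatorname{Pic}^0_Y = A^t \times B^t$, landing in the $G$-invariants. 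The action of $G$ on this product is $([-1]_{A^t}, \operatorname{id}_{B^t})$ (translations act trivially on $\operatorname{Pic}^0$), so $(A^t \times B^t)^G = A^t[2] \times B^t$. Composing $\pi^*$ with the projection onto $A^t[2]$ defines the second morphism $\phi : \operatorname{Pic}^\tau_X \to A^t[2]$.

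The composition $\phi \circ \bar p_B^*$ vanishes because $\pi^* \bar p_B^* L = p_B^* \pi_B^* L$ has trivial $A^t$-component. For fppf-surjectivity of $\phi$, given a test ring $R$ and $\alpha \in A^t[2](R)$, I would consider the line bundle $L_\alpha \boxtimes \mathcal{O}_{B_R}$ on $Y_R$; its class is $G$-invariant since $\alpha$ is 2-torsion, and the obstruction to a $G$-linearization lies in $H^2(G, R^\times) = R^\times / (R^\times)^2$, which vanishes after an fppf base change adjoining a square root. The descended bundle on the cover provides the required lift of $\alpha$.

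To identify $\ker \phi$ with $(B/\langle x \rangle)^t$, I would show that $\ker \phi$ fits into an extension $0 \to \mu_2 \to \ker \phi \to B^t \to 0$: the kernel of $\pi^* : \operatorname{Pic}_X \to \operatorname{Pic}_Y$ is the character scheme $\operatorname{Hom}(G, \mathbf{G}_m) = \mu_2$ by the Hochschild--Serre spectral sequence, and surjectivity onto $B^t$ follows from the same descent argument applied to bundles of the form $\mathcal{O}_A \boxtimes L_B$. Cartier duality applied to $0 \to \langle x \rangle \to B \to B/\langle x \rangle \to 0$ furnishes the analogous extension $0 \to \mu_2 \to (B/\langle x \rangle)^t \to B^t \to 0$, with $\mu_2 = \operatorname{Hom}(\langle x \rangle, \mathbf{G}_m)$. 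The canonical isomorphism $G \xrightarrow{\sim} \langle x \rangle$, $g \mapsto x$, identifies the two copies of $\mu_2$, and a direct verification shows that $\bar p_B^*$ respects this identification and induces the identity on $B^t$; by the five lemma, $\bar p_B^*$ then induces an isomorphism $(B/\langle x \rangle)^t \xrightarrow{\sim} \ker \phi$.

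Finally, the exact sequence of fppf group schemes produces a long exact sequence of fppf cohomology over $\operatorname{Spec}(k[\epsilon])$. Since $(B/\langle x \rangle)^t$ is smooth, its $H^1_{fppf}$ over $\operatorname{Spec}(k[\epsilon])$ vanishes, yielding a short exact sequence $0 \to T_0 (B/\langle x \rangle)^t \to H^1(X, \mathcal{O}_X) \to T_0 A^t[2] \to 0$. Because $A$ is ordinary, $A^t[2] \cong (\mathbf{Z}/2)^d \oplus \mu_2^d$ has tangent space of dimension $d$, while $T_0 (B/\langle x \rangle)^t$ has dimension $e$, so $\dim_k H^1(X, \mathcal{O}_X) = e + d = N$. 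The main obstacle I anticipate is the third step, specifically verifying that $\bar p_B^*$ carries the Cartier-dual $\mu_2$ in $(B/\langle x \rangle)^t$ isomorphically onto the $\mu_2$ of characters of $G$ inside $\operatorname{Pic}^\tau_X$; in characteristic $2$ both $\mu_2$'s are infinitesimal and invisible on $k$-points, so the identification must be made scheme-theoretically rather than by chasing elements.
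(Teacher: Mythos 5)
Your argument is correct, and it reaches the statement by the same underlying comparison the paper makes: identify $\operatorname{Pic}^\tau_X$ as an extension of $(\operatorname{Pic}^\tau_Y)^G = A^t[2]\times B^t$ by $\mu_2$, identify $(B/\langle x\rangle)^t$ as the analogous extension of $B^t$ by $\mu_2$, and match the two over $B^t$. The difference is in where the inputs come from. The paper cites Jensen's theorem \cite[Theorem 2.1]{MR512270} twice (once for $Y\to X$, once for $B\to B/\langle x\rangle$) and concludes with the snake lemma; you reprove the first sequence from scratch --- Hochschild--Serre giving $\ker(\pi^*)=\operatorname{Hom}(G,\mathbf{G}_m)=\mu_2$ functorially in $R$, and surjectivity via the linearization obstruction in $H^2(G,R^\times)=R^\times/(R^\times)^2$, killed by an fppf square-root extraction --- obtain the second from duality of isogenies, and conclude with the five lemma. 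Your tangent-space count (using $A^t[2]\cong(\mathbf{Z}/2)^{\oplus d}\times\mu_2^{\oplus d}$ for ordinary $A$) replaces the paper's remark that smoothness of $(B/\langle x\rangle)^t$ forces surjectivity on Lie algebras; both give $e+d=N$. (In fact ordinarity is not needed there: $[2]=[p]$ kills $\operatorname{Lie}(A^t)$, so $\operatorname{Lie}(A^t[2])=\operatorname{Lie}(A^t)$ has dimension $d$ regardless.) The one point you flag --- that $\bar p_B^*$ carries the Cartier-dual $\mu_2\subset(B/\langle x\rangle)^t$ isomorphically, not trivially, onto $\ker(\pi^*)$ --- does close: a homomorphism $\mu_2\to\mu_2$ over $k$ is either zero or an isomorphism, and it is nonzero because on $R$-points both copies are realized as descents of the structure sheaf along a character-twisted linearization (of $\langle x\rangle$ on $B$, resp.\ of $G$ on $Y$), and $\pi^*\bar p_B^*=p_B^*\pi_B^*$ matches these descriptions under $G\xrightarrow{\sim}\langle x\rangle$. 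This is exactly the functoriality that makes the left vertical arrow in the paper's diagram the identity, so nothing essential is missing.
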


\begin{proof}
    The second statement follows from the first by computing the Lie algebras of both sides (the fact that $B/\langle x \rangle ^t$ is smooth implies that $\operatorname{Pic}^\tau_X \to A^t[2]$ induces a surjection of Lie algebras). We prove the first. By \cite[Theorem 2.1]{MR512270}, there is an exact sequence of group schemes
    $$
    0 \to \mu_2 \to \operatorname{Pic}_X \to (\operatorname{Pic}_Y)^{G} \to 0
    $$
    and hence also an exact sequence
    $$
    0 \to \mu_2 \to \operatorname{Pic}^{\tau}_X \to (\operatorname{Pic}^{\tau}_Y)^{G} \to 0.
    $$
    We have $\operatorname{Pic}^\tau_Y = A^t\times B^t$ where $G$ acts by $(-1)$ on the first component and trivially on the second. Hence $(\operatorname{Pic}^{\tau}_Y)^{G} = A^t[2] \times B^t$. We may also apply Jensen's result to the quotient $B/\langle x \rangle$ and we obtain a commutative diagram with exact rows
\[\begin{tikzcd}
	0 & {\mu_2} & {B/\langle x \rangle ^t} & {B^t} & 0 \\
	0 & {\mu_2} & {\operatorname{Pic}^\tau _X} & {A^t[2] \times B^t} & 0.
	\arrow[from=1-1, to=1-2]
	\arrow[from=1-2, to=1-3]
	\arrow[from=1-2, to=2-2]
	\arrow[from=1-3, to=1-4]
	\arrow[from=1-3, to=2-3]
	\arrow[from=1-4, to=1-5]
	\arrow[from=1-4, to=2-4]
	\arrow[from=2-1, to=2-2]
	\arrow[from=2-2, to=2-3]
	\arrow[from=2-3, to=2-4]
	\arrow[from=2-4, to=2-5]
\end{tikzcd}\]
The left vertical arrow is the identity and the right vertical arrow is the inclusion of the second summand. Hence the Snake Lemma gives the exact sequence of the statement.
\end{proof}

\begin{lemma}
    We have $H^1(X, \mu_2) \cong (\mathbf{Z}/2)^{\oplus N}, H^2(X, \mu_2) \cong (\mathbf{Z}/2)^{\oplus N^2}$.
\end{lemma}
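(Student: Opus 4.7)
The plan is to apply Lemma \ref{lemma-computation} directly to $X$, which is legitimate because $X$ is ordinary by Lemma \ref{lemma-quotientofordinary} (the group $G = \mathbf{Z}/2$ acts freely on the ordinary variety $Y = A \times B$, and a product of ordinary abelian varieties is ordinary). Lemma \ref{lemma-computation} then gives, for $p = 2$,
\[
H^1(X, \mu_2) \cong (\mathbf{Z}/2)^{\dim_k H^0(X, \Omega^1_X)}, \qquad H^2(X, \mu_2) \cong (\mathbf{Z}/2)^{\dim_k H^1(X, \Omega^1_X)}.
\]
So everything reduces to computing $\dim_k H^0(X, \Omega^1_X)$ and $\dim_k H^1(X, \Omega^1_X)$.

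Using the previous lemma, $\Omega^1_X \cong \mathcal{O}_X^{\oplus N}$, so both computations reduce to coherent cohomology of the structure sheaf. For $H^0$, we have $H^0(X, \Omega^1_X) \cong H^0(X, \mathcal{O}_X)^{\oplus N} \cong k^{\oplus N}$ since $X$ is proper and geometrically connected (it is the quotient of the connected variety $Y$ by a finite group acting freely, hence connected, and its global functions are those $G$-invariant global functions on $Y$, which are just $k$). Therefore $\dim_k H^0(X, \Omega^1_X) = N$, proving the first isomorphism.

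For $H^1$, we have $H^1(X, \Omega^1_X) \cong H^1(X, \mathcal{O}_X)^{\oplus N}$, and Lemma \ref{lemma-picardschemecomp} tells us $\dim_k H^1(X, \mathcal{O}_X) = N$. Hence $\dim_k H^1(X, \Omega^1_X) = N^2$, proving the second isomorphism. There is no real obstacle here: the lemma is essentially an immediate combination of the three ingredients (the ordinary hypothesis, triviality of $\Omega^1_X$, and the Picard scheme computation) that have already been established.
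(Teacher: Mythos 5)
Your proof is correct and follows exactly the paper's route: invoke Lemma \ref{lemma-computation} (valid since $X$ is ordinary), then compute $\dim_k H^0(X,\Omega^1_X)=N$ and $\dim_k H^1(X,\Omega^1_X)=N^2$ from the triviality $\Omega^1_X\cong\mathcal{O}_X^{\oplus N}$ together with $H^0(X,\mathcal{O}_X)=k$ and the dimension count of $H^1(X,\mathcal{O}_X)$ from Lemma \ref{lemma-picardschemecomp}. You simply spell out the details that the paper compresses into ``by the preceding lemmas.''
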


\begin{proof}
    We have $H^0(X, \Omega^1_X) \cong k^{\oplus N}$ and $H^1(X, \Omega^1_X) \cong k^{\oplus N^2}$ by the preceding lemmas, so the result follows from Lemma \ref{lemma-computation}.
\end{proof}

\begin{lemma}
    We have $H^1(X, \mathbf{Q}_2(1)) \cong \mathbf{Q}_2^{\oplus e}, H^2(X, \mathbf{Q}_2(1)) \cong \mathbf{Q}_2^{\oplus (d^2+e^2)}$.
\end{lemma}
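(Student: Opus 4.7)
The plan is to combine Illusie's theorem with Galois descent along the étale double cover $\pi \colon Y \to X$ and the standard $F$-isocrystal structure on the crystalline cohomology of an ordinary abelian variety. By Illusie's theorem quoted above, $\dim_{\mathbf{Q}_2} H^i(X, \mathbf{Q}_2(1))$ equals the dimension over $K = \operatorname{Frac}(W(k))$ of the slope-$1$ part of $H^i_{\operatorname{crys}}(X/W)_K$, so I would first identify this slope-$1$ piece.

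Since $\pi$ is a finite étale Galois cover with group $G = \mathbf{Z}/2$, the Hochschild--Serre spectral sequence
$$
E_2^{p,q} = H^p(G, H^q_{\operatorname{crys}}(Y/W)) \Longrightarrow H^{p+q}_{\operatorname{crys}}(X/W)
$$
collapses after inverting $p$, because the higher cohomology of a finite group with $\mathbf{Q}$-coefficients is annihilated by the order of the group. This yields an isomorphism of $F$-isocrystals $H^i_{\operatorname{crys}}(X/W)_K \cong \bigl(H^i_{\operatorname{crys}}(Y/W)_K\bigr)^G$, reducing the task to computing the slope-$1$ part of the $G$-invariants on the cohomology of $Y$.

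By Künneth, $H^1_{\operatorname{crys}}(Y/W)_K = H^1_{\operatorname{crys}}(A/W)_K \oplus H^1_{\operatorname{crys}}(B/W)_K$, and since $A$ and $B$ are ordinary each summand splits as an equal-dimensional direct sum of its slope-$0$ and slope-$1$ parts (of dimension $d$ and $e$ respectively). The generator of $G$ acts as $[-1]_A^* = -1$ on $H^1_{\operatorname{crys}}(A/W)_K$ and trivially on $H^1_{\operatorname{crys}}(B/W)_K$, because translations on an abelian variety act trivially on cohomology. Hence $\bigl(H^1_{\operatorname{crys}}(Y/W)_K\bigr)^G = H^1_{\operatorname{crys}}(B/W)_K$, whose slope-$1$ part has dimension $e$, settling the case $i = 1$.

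For $i = 2$, $\wedge^2 H^1_{\operatorname{crys}}(Y/W)_K$ decomposes via Künneth as
$$
\wedge^2 H^1_{\operatorname{crys}}(A)_K \ \oplus\ \bigl(H^1_{\operatorname{crys}}(A)_K \otimes H^1_{\operatorname{crys}}(B)_K\bigr) \ \oplus\ \wedge^2 H^1_{\operatorname{crys}}(B)_K,
$$
on which $G$ acts by $+1, -1, +1$; so the $G$-invariants are the outer two summands. For an ordinary $F$-isocrystal $U \oplus V$ with $U$ the slope-$0$ part and $V$ the slope-$1$ part of equal rank $r$, the slope-$1$ part of $\wedge^2(U \oplus V)$ is exactly $U \otimes V$, of dimension $r^2$. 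Applying this with $r = d$ and $r = e$ yields a slope-$1$ contribution of $d^2 + e^2$, as claimed. The only non-routine ingredient is the crystalline Hochschild--Serre spectral sequence for a finite étale Galois cover, but I only need it after inverting $p$, where it collapses by order-of-group arguments.
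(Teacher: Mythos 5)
Your proof is correct and follows essentially the same route as the paper: reduce via Illusie's theorem to the slope-$1$ part of crystalline cohomology, identify $H^i_{crys}(X/W)_K$ with the $G$-invariants of $H^i_{crys}(Y/W)_K$, and compute those invariants via K\"unneth together with the sign of the $[-1]^*$- and $\tau_x^*$-actions, extracting the slope-$1$ piece from the ordinarity of $A$ and $B$. The only difference is that you explicitly justify the $G$-invariants identification by the (rationalized) Hochschild--Serre spectral sequence, a step the paper asserts without comment.
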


\begin{proof}
    Write $W = W(k)$ and $K = \operatorname{Frac}(W)$. We have $H^1_{crys}(X/W)_K = (H^1_{crys}(Y/W)_K)^G$ and $H^1_{crys}(Y/W)_K = H^1_{crys}(A/W)_K \oplus H^1_{crys}(B/W)_K$ where $G$ acts on the first component by multiplication by $-1$ and on the second trivially. Hence $H^1_{crys}(X/W)_K = H^1(B/W)_K$. By \cite[II, Th\'eor\`eme 5.5]{MR565469}, the dimension over $\mathbf{Q}_2$ of $H^1(X, \mathbf{Q}_2(1))$ is equal to the dimension over $K$ of the slope $1$ part of $H^1(X/W)_K = H^1(B/W)_K$ which is $e$ since $B$ is an ordinary abelian variety of dimension $e$.

    Next, 
    $$H^2_{crys}(Y/W)_K = H^2_{crys}(A/W)_K \oplus  (H^1(A/W)_K \otimes   H^1(B/W)_K) \oplus H^2_{crys}(B/W)_K$$
    and $H^2_{crys}(A/W)_K = \bigwedge^2H^1_{crys}(A/W)_K$ and similarly for $B$. Computing $G$-invariants as above we see
    $$
    H^2_{crys}(X/W)_K = H^2_{crys}(A/W)_K \oplus  H^2_{crys}(B/W)_K.
    $$
    Since $A, B$ are ordinary abelian varieties, the slope one parts of the two components have dimensions $d^2, e^2$ respectively. Applying \cite[II, Th\'eor\`eme 5.5]{MR565469} again gives the result.
\end{proof}

\begin{corollary}
\label{corollary-brauermod2}
    We have $H^2(X, \mathbf{G}_m)/2H^2(X, \mathbf{G}_m) \cong (\mathbf{Z}/2)^{\oplus M}$ where 
    $$
    M = 2de - d.
    $$
\end{corollary}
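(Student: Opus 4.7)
The plan is to compute $\dim_{\mathbf{F}_2} H^2(X, \mathbf{G}_m)/2$ by identifying it with $\dim_{\mathbf{F}_2} H^3(X, \mathbf{Z}_2(1))[2]$ via Proposition~\ref{lemma-cofinite} and Lemma~\ref{lemma-recoveref}, and then evaluating the latter by repeated application of Lemma~\ref{lemma-univcoeff} together with the dimensions already computed in the preceding lemmas.

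First I would establish that $H^1(X, \mathbf{Z}_2(1)) \cong \mathbf{Z}_2^{\oplus e}$: since $X$ is reduced over $k$ of characteristic $2$ we have $H^0(X, \mu_2) = 0$, so Lemma~\ref{lemma-univcoeff} at $i = 0$ forces $H^1(X, \mathbf{Z}_2(1))[2] = 0$; combined with Lemma~\ref{lemma-finitetypeifordinary} and the computation of $H^1(X, \mathbf{Q}_2(1))$ in the preceding lemma, this implies that $H^1(X, \mathbf{Z}_2(1))$ is free of rank $e$. Lemma~\ref{lemma-univcoeff} at $i = 1$ then yields
\begin{equation*}
\dim_{\mathbf{F}_2} H^2(X, \mathbf{Z}_2(1))[2] = \dim H^1(X, \mu_2) - \dim H^1(X, \mathbf{Z}_2(1))/2 = N - e = d.
\end{equation*}
Since $H^2(X, \mathbf{Z}_2(1))$ is a finite $\mathbf{Z}_2$-module of rank $d^2 + e^2$ by the preceding lemma, its torsion subgroup $T$ is a finite abelian $2$-group with $\dim_{\mathbf{F}_2} T/2T = \dim_{\mathbf{F}_2} T[2] = d$, giving $\dim_{\mathbf{F}_2} H^2(X, \mathbf{Z}_2(1))/2 = d^2 + e^2 + d$. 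A third application of Lemma~\ref{lemma-univcoeff}, now at $i = 2$, produces
\begin{equation*}
\dim_{\mathbf{F}_2} H^3(X, \mathbf{Z}_2(1))[2] = \dim H^2(X, \mu_2) - \dim H^2(X, \mathbf{Z}_2(1))/2 = N^2 - (d^2 + e^2 + d) = 2de - d.
\end{equation*}

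By Proposition~\ref{lemma-cofinite}, $H^2(X, \mathbf{G}_m)[2^\infty] \cong (\mathbf{Q}_2/\mathbf{Z}_2)^{\oplus e'} \oplus F$ for some $e' \geq 0$ and finite abelian $2$-group $F$, and Lemma~\ref{lemma-recoveref} gives $F[2] \cong H^3(X, \mathbf{Z}_2(1))[2]$, whence $\dim F/2F = \dim F[2] = 2de - d$. The last task is the identification $H^2(X, \mathbf{G}_m)/2 \cong F/2F$, for which I would invoke the classical theorem of Grothendieck that $H^2(X, \mathbf{G}_m)$ is torsion for any regular Noetherian scheme: decomposing into $p$-primary parts, each summand for odd $p$ and the $(\mathbf{Q}_2/\mathbf{Z}_2)^{\oplus e'}$-piece are $2$-divisible and so vanish modulo $2$, leaving only $F/2F$. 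This final torsion-ness input is the main subtlety in the argument; the rest is bookkeeping with the universal coefficient sequences.
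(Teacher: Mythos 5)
Your argument is correct and follows essentially the same route as the paper: the paper likewise deduces $H^1(X,\mathbf{Z}_2(1))\cong\mathbf{Z}_2^{\oplus e}$, $H^2(X,\mathbf{Z}_2(1))[2]\cong(\mathbf{Z}/2)^{\oplus d}$, $H^2(X,\mathbf{Z}_2(1))/2\cong(\mathbf{Z}/2)^{\oplus(d^2+e^2+d)}$ and $H^3(X,\mathbf{Z}_2(1))[2]\cong(\mathbf{Z}/2)^{\oplus M}$ from Lemmas \ref{lemma-univcoeff} and \ref{lemma-finitetypeifordinary} and the two preceding lemmas, then uses torsion-ness of $H^2(X,\mathbf{G}_m)$ together with Proposition \ref{lemma-cofinite} and Lemma \ref{lemma-recoveref} to identify $H^2(X,\mathbf{G}_m)/2\cong F/2F\cong F[2]$. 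You have merely written out the universal-coefficient bookkeeping that the paper leaves implicit; no changes needed.
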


\begin{proof}
    From universal coefficients (Lemma \ref{lemma-univcoeff}), Lemma \ref{lemma-finitetypeifordinary} and the previous two Lemmas we deduce:
    $$
    H^1(X, \mathbf{Z}_2(1)) \cong \mathbf{Z}_2^{\oplus e}, H^2(X, \mathbf{Z}_2(1))[2] \cong (\mathbf{Z}/2)^{\oplus d}, H^2(X, \mathbf{Z}_2(1))/2 \cong (\mathbf{Z}/2)^{\oplus (d^2+e^2+d)},
    $$
    and finally $H^3(X, \mathbf{Z}_2(1))[2] \cong (\mathbf{Z}/2)^{\oplus M}$ where 
    $$
    M = N^2 - (d^2+e^2+d) = 2de-d.
    $$
   The group $H^2(X, \mathbf{G}_m)$ is a torsion abelian group by \cite[Proposition 1.4]{MR1608805} and its $2$-primary part has 
    $$
    H^2(X, \mathbf{G}_m)[2^{\infty}] \cong (\mathbf{Q}_2/\mathbf{Z}_2)^{\oplus e} \oplus F
    $$
    where $e \geq 0$ is an integer and $F$ is a finite abelian $2$-group by Theorem \ref{lemma-cofinite}. Hence
    $$
    H^2(X, \mathbf{G}_m)/2H^2(X, \mathbf{G}_m) \cong F/2F.
    $$
    Since $F$ is a finite abelian $2$-group there is a non-canonical isomorphism $F/2F \cong F[2]$ and so
    $$
    H^2(X, \mathbf{G}_m)/2H^2(X, \mathbf{G}_m) \cong H^3(X, \mathbf{Z}_2(1))[2] \cong (\mathbf{Z}/2)^{\oplus M}
    $$
    by Lemma \ref{lemma-recoveref}.
\end{proof}

We also compute the following.

\begin{lemma}
\label{lemma-cohomologyofpic}
    Write $T = \operatorname{Spec}(k[\epsilon]).$ Then 
    $$
    H^1(T, \operatorname{Pic}_{X}) \cong H^1(A, \mathcal{O}_A)
    $$
    as vector spaces over $k$.
\end{lemma}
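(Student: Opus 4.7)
The plan is to reduce $H^1(T,\operatorname{Pic}_X)$ to $H^1(T,A^t[2])$ via the exact sequence from Lemma \ref{lemma-picardschemecomp}, and then to compute the latter by Kummer theory for $\mu_2$.

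First I would pass from $\operatorname{Pic}_X$ to $\operatorname{Pic}^\tau_X$. The quotient $\operatorname{Pic}_X/\operatorname{Pic}^\tau_X$ is a constant étale group scheme (the finitely generated Néron--Severi group of $X$), so its fppf cohomology on the strictly henselian $T$ agrees with étale cohomology and vanishes in positive degrees. Moreover the connecting map $\operatorname{NS}(X)\to H^1(T,\operatorname{Pic}^\tau_X)$ is zero because the retraction $T \to \operatorname{Spec}(k) \hookrightarrow T$ already splits $\operatorname{Pic}_X(T)\twoheadrightarrow \operatorname{Pic}_X(k)$. Hence $H^1(T,\operatorname{Pic}_X)\cong H^1(T,\operatorname{Pic}^\tau_X)$.

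Next I would apply the long exact sequence of fppf cohomology on $T$ to
$$0 \to B/\langle x\rangle^t \to \operatorname{Pic}^\tau_X \to A^t[2] \to 0.$$
The group $B/\langle x\rangle^t$ is an abelian variety, hence smooth, so by Grothendieck's theorem its fppf cohomology on $T$ equals étale cohomology and vanishes in positive degrees (again since $T$ is strictly henselian with algebraically closed residue field). This gives $H^1(T,\operatorname{Pic}^\tau_X)\xrightarrow{\cong} H^1(T,A^t[2])$.

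Finally I would compute $H^1(T,A^t[2])$ directly. Since $A$ (and therefore $A^t$) is an ordinary abelian variety of dimension $d$ over the algebraically closed field $k$ of characteristic $2$, the $2$-torsion decomposes as $A^t[2]\cong \mu_2^{\oplus d}\oplus(\mathbf{Z}/2)^{\oplus d}$. The étale summand contributes nothing to $H^1(T,-)$, and for the connected summand the Kummer sequence together with $\operatorname{Pic}(T)=0$ gives
$$H^1(T,\mu_2)\cong k[\epsilon]^\times/(k[\epsilon]^\times)^2.$$
In characteristic $2$ squaring sends $a+b\epsilon$ to $a^2$, with image $k^\times$ by algebraic closure, so the cokernel is $1+k\epsilon$, carrying a natural one-dimensional $k$-vector space structure via $\epsilon\mapsto\lambda\epsilon$. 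Thus $H^1(T,A^t[2])\cong k^{\oplus d}\cong H^1(A,\mathcal{O}_A)$.

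The one small point requiring care is to verify that the chain of isomorphisms is $k$-linear for the standard vector space structures on both sides. I expect to handle this by observing that every map in the argument is functorial with respect to the $k$-algebra endomorphisms $\epsilon\mapsto \lambda\epsilon$ of $k[\epsilon]$, which is exactly the structure described in Remark \ref{remark-vectorspacestructure}.
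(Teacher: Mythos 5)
Your argument is correct, and the first two reductions (from $\operatorname{Pic}_X$ to $\operatorname{Pic}^\tau_X$, then to $A^t[2]$ via the sequence of Lemma \ref{lemma-picardschemecomp} and smoothness of $B/\langle x\rangle^t$) coincide with the paper's. Where you diverge is the computation of $H^1(T, A^t[2])$: the paper uses the multiplication-by-$2$ sequence $0 \to A^t[2] \to A^t \xrightarrow{2} A^t \to 0$ and a snake-lemma argument on the tangent-space sequence of $A^t(k[\epsilon]) \to A^t(k)$ to produce a canonical isomorphism $\operatorname{Lie}(A^t) \cong H^1(T, A^t[2])$, which works for an arbitrary abelian variety $A$ in characteristic $2$; you instead invoke ordinarity of $A$ (hence of $A^t$) to split $A^t[2] \cong \mu_2^{\oplus d} \oplus (\mathbf{Z}/2)^{\oplus d}$ over the perfect field $k$ and then compute $H^1(T,\mu_2) \cong k[\epsilon]^\times/(k[\epsilon]^\times)^2 \cong 1 + k\epsilon$ by Kummer theory. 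Both are valid; since the lemma only asserts an abstract isomorphism of $k$-vector spaces, your dimension count suffices, and the ordinarity hypothesis you use is part of the standing assumptions of Section \ref{section-theexample}. The trade-off is that the paper's route is independent of ordinarity and yields a canonical identification with $\operatorname{Lie}(A^t) = H^1(A,\mathcal{O}_A)$, while yours is more explicit and makes the source of the cohomology (the connected, multiplicative part $\mu_2^{\oplus d}$ of $A^t[2]$) visible. Your closing remark about $k$-linearity via functoriality in $\epsilon \mapsto \lambda\epsilon$ is the right way to pin down the vector space structures, consistent with Remark \ref{remark-vectorspacestructure}.
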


\begin{proof}
    We use repeatedly that for a smooth group scheme $G$ we have $H^i(T, G) = 0$ for $i > 0$ by Grothendieck's Theorem \cite[Th\'eor\`eme 11.7]{MR244271}. Since $k = \bar{k}$, we have
    $$
    \operatorname{Pic}_X \cong \operatorname{Pic}^\tau_X \times \operatorname{NS}(X)/torsion
    $$
    where the second is the constant group scheme associated to a free abelian group of finite rank. The second factor has no cohomology so 
    $$
    H^1(T, \operatorname{Pic}_X) = H^1(T, \operatorname{Pic}^\tau_X).
    $$
    Next, from the exact sequence of Lemma \ref{lemma-picardschemecomp}
    and the fact that $B/\langle x \rangle ^t$ is smooth we have 
    $$
    H^1(T, \operatorname{Pic}_X) = H^1(T, A^t[2]). 
    $$
    The long exact sequence of cohomology associated to the short exact sequence of group schemes
    $$
    0 \to A^t[2] \to A^t \xrightarrow{2} A^t \to 0
    $$
    identifies $H^1(T, A^t[2])$ with the cokernel of multiplication by $2$ on the group $A^t(k[\epsilon]).$ 
    
    To compute this, consider the commutative diagram with exact rows
\[\begin{tikzcd}
	0 & {\operatorname{Lie}(A^t)} & {A^t(k[\epsilon])} & {A^t(k)} & 0 \\
	0 & {\operatorname{Lie}(A^t)} & {A^t(k[\epsilon])} & {A^t(k)} & 0
	\arrow[from=1-1, to=1-2]
	\arrow[from=1-2, to=1-3]
	\arrow["2"', from=1-2, to=2-2]
	\arrow[from=1-3, to=1-4]
	\arrow["2"', from=1-3, to=2-3]
	\arrow[from=1-4, to=1-5]
	\arrow["2"', from=1-4, to=2-4]
	\arrow[from=2-1, to=2-2]
	\arrow[from=2-2, to=2-3]
	\arrow[from=2-3, to=2-4]
	\arrow[from=2-4, to=2-5].
\end{tikzcd}\]
The left vertical map is zero as $k$ has characteristic $2$. Since multiplication by $2$ is a group homomorphism, we see that it induces the zero map on all tangent spaces. The map on kernels $A^t(k[\epsilon])[2] \to A^t(k)[2]$ is surjective because if $x: \operatorname{Spec}(k) \to A^t[2]$ then we may consider the lift of $x$ to $\operatorname{Spec}k[\epsilon]$ corresponding to the zero tangent vector, and this is a $2$-torsion element of $A^t(k[\epsilon])$ by the previous sentence. Also, since $k$ is algebraically closed and multiplication by $2$ is an isogeny, we have $A^t(k)/2A^t(k) = 0$. Therefore, the snake lemma gives an isomorphism
$$
\operatorname{Lie}(A^t) \cong H^1(T, A^t[2]),
$$
 as needed. 
 \end{proof}

\section{Proof that the example works}
\label{section-proofmaintheorem}

Let $X$ be the variety constructed in the previous section. Assume that we chose the integers $d, e \geq 1$ in such a way that 
$$2de-d > d(e+d).$$
For fixed $d$ this holds whenever $e \gg 0$, and it holds for example when $d = 1, e = 3$.

\begin{thm}
\label{thm-mainpaperversion}
There is an element $\alpha \in H^2(X, \mathbf{G}_m)$ for which the map $a$ of Lemma \ref{lemma-exactsequence} has non-zero differential at the identity element. In particular, the map $a$ is not zero on $k[\epsilon]$-points, and for any $\mathbf{G}_m$-gerbe $\mathcal{X} \to X$ with cohomology class $\alpha$ the homomorphism of group schemes $\operatorname{Aut}^0_{\mathcal{X}} \to \operatorname{Aut}^0_X$ from (\ref{equn-olssonsequence}) is not surjective. 
\end{thm}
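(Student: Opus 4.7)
The plan is to apply Lemma~\ref{lemma-derivative}: for $\alpha \in H^2(X, \mathbf{G}_m)$ the differential of $a$ at the identity factors as
$$
H^0(X, T_X) \xrightarrow{\theta \mapsto \theta(\operatorname{dlog}(\alpha))} H^2(X, \mathcal{O}_X) \xrightarrow{b} \Phi^2_{fl}(k[\epsilon]),
$$
with $\ker b = V := H^1(k[\epsilon], \operatorname{Pic}_X)$. So it suffices to produce $\alpha$ and a vector field $\theta$ with $\theta(\operatorname{dlog}(\alpha)) \notin V$. Using the trivialization $\Omega^1_X \cong \mathcal{O}_X^{\oplus N}$ and its dual frame for $T_X$, the contraction pairing identifies $H^2(X, \Omega^1_X)$ with $H^2(X, \mathcal{O}_X)^{\oplus N}$, and the condition that every $\theta$ sends $\operatorname{dlog}(\alpha)$ into $V$ becomes precisely $\operatorname{dlog}(\alpha) \in V^{\oplus N}$. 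The task therefore reduces to showing that the image of $\operatorname{dlog}\colon H^2(X, \mathbf{G}_m) \to H^2(X, \Omega^1_X)$ is not contained in $V^{\oplus N}$.

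I will do this by comparing $k$-dimensions. By Lemma~\ref{lemma-cohomologyofpic}, $\dim_k V = d$, so $\dim_k V^{\oplus N} = dN = d(d+e)$. For the image of $\operatorname{dlog}$: since this map kills squares it factors through $H^2(X, \mathbf{G}_m)/2$, which has $\mathbf{F}_2$-dimension $M = 2de - d$ by Corollary~\ref{corollary-brauermod2}. Milne's short exact sequence identifies the factored map with the composite $H^2(X, \mathbf{G}_m)/2 \hookrightarrow H^2(X, \nu_2) \xrightarrow{\varphi} H^2(X, \Omega^1_X)$. Remark~\ref{remark-extensionofscalars} combined with the equality $\dim_{\mathbf{F}_2} H^2(X, \nu_2) = \dim_k H^2(X, \Omega^1_X)$ from Lemma~\ref{lemma-computation} forces $\varphi \otimes_{\mathbf{F}_2} k$ to be a $k$-linear isomorphism. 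Since extension of scalars preserves the injection $H^2(X, \mathbf{G}_m)/2 \hookrightarrow H^2(X, \nu_2)$, the $k$-span of $\operatorname{dlog}(H^2(X, \mathbf{G}_m))$ in $H^2(X, \Omega^1_X)$ has $k$-dimension exactly $M$.

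The hypothesis $2de - d > d(d+e)$ is precisely $M > dN$, so this $M$-dimensional $k$-subspace cannot fit inside $V^{\oplus N}$, and in particular the image of $\operatorname{dlog}$ itself is not contained in $V^{\oplus N}$; this yields the required $\alpha$. The stated consequences are then formal: a map with nonvanishing differential at the identity cannot be zero on $k[\epsilon]$-points, and by Lemma~\ref{lemma-exactsequence} the offending tangent vector provides a $k[\epsilon]$-point of $\operatorname{Aut}^0_X$ which fails (even fppf-locally) to lift to $\operatorname{Aut}_{\mathcal{X}}$, obstructing surjectivity of $\operatorname{Aut}^0_{\mathcal{X}} \to \operatorname{Aut}^0_X$. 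The one subtle step is showing that the $k$-span of the image of $\operatorname{dlog}$ attains its maximal possible dimension $M$ rather than something smaller; this rests essentially on the ordinarity of $X$ via Remark~\ref{remark-extensionofscalars}, and everything else is straightforward bookkeeping from Section~\ref{section-theexample}.
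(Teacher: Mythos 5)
Your proposal is correct and follows essentially the same route as the paper: reduce via Lemma \ref{lemma-derivative} to showing the image of $\operatorname{dlog}$ is not contained in $H^1(k[\epsilon],\operatorname{Pic}_X)\otimes_k H^0(X,\Omega^1_X)$ (your $V^{\oplus N}$ under the chosen frame is the same subspace), then compare the $k$-span of the image of $\operatorname{dlog}$ (dimension $2de-d$, via Corollary \ref{corollary-brauermod2} and Remark \ref{remark-extensionofscalars}) with $\dim_k V^{\oplus N}=d(d+e)$. The only differences are notational, e.g.\ writing $H^2(X,\nu_2)$ for $H^3(X,\mu_2)$ and a direct-sum rather than tensor-product bookkeeping.
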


\begin{remark}
\label{remark-classicaligusasurface}
       If instead we took $d = e = 1$, then $H^2(X, \mathcal{O}_X) \cong k \cong H^1(k[\epsilon], \operatorname{Pic}_X)$ by the fact that $X$ has trivial tangent bundle and Lemma \ref{lemma-cohomologyofpic}. Thus condition (2) of Theorem \ref{thm-sufficientconditions} holds and $\operatorname{Aut}^0_{\mathcal{X}} \to \operatorname{Aut}^0_X$ is surjective for any $\mathbf{G}_m$-gerbe $\mathcal{X} \to X$.
\end{remark}

\begin{proof}
    The second sentence follows from the first: If the differential of $a$ is not zero then by definition $a$ is non-zero on $k[\epsilon]$-points and $a|_{\operatorname{Aut}^0_X}$ is not zero as a map of sheaves. From Lemma \ref{lemma-exactsequence}, the fact that $a|_{\operatorname{Aut}^0_X}$ is not zero implies that $\operatorname{Aut}^0_{\mathcal{X}} \to \operatorname{Aut}^0_X$ is not surjective. Let us prove the first statement. 

    \underline{Claim 1:} It suffices to show that the image of 
    \begin{equation}
    \label{equn-dlogmap}
    \operatorname{dlog} : H^2(X, \mathbf{G}_m) \to H^2(X, \Omega^1_X) \cong H^2(X, \mathcal{O}_X) \otimes _k H^0(X, \Omega^1_X)
    \end{equation}
    is not contained in the subspace $H^1(\operatorname{Spec}(k[\epsilon], \operatorname{Pic}_{X}) \otimes _k H^0(X, \Omega^1_X)$ where the inclusion $H^1(\operatorname{Spec}(k[\epsilon], \operatorname{Pic}_{X}) \hookrightarrow H^2(X, \mathcal{O}_X)$ is from Theorem \ref{thm-raynaud}. 

    If there is $\alpha \in H^2(X, \mathbf{G}_m)$ such that $\operatorname{dlog} (\alpha) = \sum_{i = 1}^k a_i \otimes b_i$ and $a_1 \notin H^1(\operatorname{Spec}(k[\epsilon], \operatorname{Pic}_{X})$ and $b_1, \dots , b_k \in H^0(X, \Omega^1_X)$ are linearly independent, then since $\Omega^1_X$ is free, we can find $\theta \in H^0(X, T_X) = H^0(X, \Omega^1_X)^*$ such that $\theta(b_1) \neq 0$ but $\theta(b_i) = 0$ for $i > 1$ and then $\theta(\operatorname{dlog}(\alpha)) \notin H^1(\operatorname{Spec}(k[\epsilon], \operatorname{Pic}_{X}).$ Then by Lemma \ref{lemma-derivative}, for any gerbe $\mathcal{X} \to X$ with class $\alpha$ the differential of $a$ at the identity is not zero. 

    \underline{Claim 2:} The $k$-span of the image of (\ref{equn-dlogmap}) has dimension $2de-d$.

    The map (\ref{equn-dlogmap}) factors as
    $$
    H^2(X, \mathbf{G}_m) \to H^2(X, \mathbf{G}_m)/2 \xhookrightarrow{i} H^3(X, \mu_2) \xhookrightarrow{\varphi} H^2(X, \Omega^1_X)
    $$
    where $i$ comes from the Kummer sequence and $\varphi$ is the inclusion from Remark \ref{remark-extensionofscalars}. That remark showed that the image $I$ of $\varphi$ satisfies $I \otimes _{\mathbf{F}_2} k = H^2(X, \Omega^1_X)$. The group $H^2(X, \mathbf{G}_m)/2$ is an $\mathbf{F}_2$-subspace of $H^3(X, \mu_2)$ of dimension $2de-d$ by Corollary \ref{corollary-brauermod2}, so it follows that the image of $H^2(X, \mathbf{G}_m)/2$ in $H^2(X, \Omega^1_X)$ spans a $k$-subspace of dimension $2de-d$ as claimed.

    Now the image of (\ref{equn-dlogmap}) cannot be contained in the subspace $H^1(\operatorname{Spec}(k[\epsilon]), \operatorname{Pic}_{X}) \otimes _k H^0(X, \Omega^1_X)$ because this has dimension $d(e+d)$ by Lemma \ref{lemma-cohomologyofpic} and this concludes the proof.
\end{proof}

\bibliographystyle{alpha}
\bibliography{references}

\textsc{UC Berkeley Department of Mathematics, Department of Mathematics
970 Evans Hall, MC 3840
Berkeley, CA 94720-3840}

Email: \href{nolander@berkeley.edu}{nolander@berkeley.edu}

\end{document}